\theoremstyle{definition}
\newtheorem{definition}{Definition}[section]
\newtheorem{remark}[definition]{Remark}
\theoremstyle{plain}
\newtheorem{theorem}[definition]{Theorem}
\newtheorem{lemma}[definition]{Lemma}
\newtheorem{proposition}[definition]{Proposition}
\newtheorem{corollary}[definition]{Corollary}
\numberwithin{equation}{section}
\def\N{{\mathbb N}}
\begin{document}
\title{Some Extremal Values of the Number of Congruences\\ of a Finite Lattice}
\author{J\' ulia KULIN and Claudia MURE\c SAN\thanks{Corresponding author}}
\date{\today }
\maketitle

\begin{abstract} We study the smallest, as well as the largest numbers of congruences of lattices of an arbitrary finite cardinality $n$. Continuing the work of Freese and Cz\' edli, we prove that the third, fourth and fifth largest numbers of congruences of an $n$--element lattice are: $5\cdot 2^{n-5}$ if $n\geq 5$, respectively $2^{n-3}$ and $7\cdot 2^{n-6}$ if $n\geq 6$. We also determine the structures of the $n$--element lattices having $5\cdot 2^{n-5}$, respectively $2^{n-3}$ congruences, along with the structures of their congruence lattices.

{\em Keywords}: (finite) lattice, (principal) congruence, (prime) interval, atom, (ordinal, horizontal) sum.

{\em MSC $2010$}: primary: 06B10; secondary: 06B05.\end{abstract}

\section{Introduction}
\label{introduction}

We shall use the notation ${\rm Con}(A)$ for the congruence lattice of an algebra $A$, along with other common notations, recalled in Section \ref{preliminaries} below. For any $n\in \N ^*$, let ${\rm NCL}(n)=\{|{\rm Con}(L)|\ |\ L$ is a lattice with $|L|=n\}\subset \N ^*$, ${\rm Gncl}(1,n)=\max ({\rm NCL}(n))$ and, for any $p\in \N ^*$ such that $\{k\in {\rm NCL}(n)\ |\ k<{\rm Gncl}(p,n)\}\neq \emptyset $, ${\rm Gncl}(p+1,n)=\max (\{k\in {\rm NCL}(n)\ |\ k<{\rm Gncl}(p,n)\})$. Also, for any $p\in \N ^*$, let ${\rm Lnc}(p,n)=\{L\ |\ L$ is a lattice with $|L|=n$ and $|{\rm Con}(L)|={\rm Gncl}(p,n)\}$. We investigate the elements of ${\rm NCL}(n)$, as well as the elements of the sets ${\rm Lnc}(p,n)$ and the structures of their congruence lattices.

Regarding the problems related to the present work, we mention the representation problem for lattices in the form of congruence lattices of lattices; its investigation goes back to R. P. Dilworth and was mile--stoned by Gr\" atzer and Schmidt~\cite{grasch}, Wehrung~\cite{wehrung}, R\r{u}\v{z}i\v{c}ka~\cite{ruzicka}, Gr\" atzer and Knapp~\cite{grakna},
and Plo\v s\v cica~\cite{ploscica}, and surveyed in Gr\" atzer~\cite{grafin} and Schmidt~\cite{schmidt}. A lot of results have been proved on the representation problem of two or more lattices and certain maps among them by (complete) congruences; for example, see Gr\"atzer and Schmidt~\cite{grSch1995}, Gr\" atzer and Lakser~\cite{gglakser86}, Cz\' edli~\cite{czgreprect,czgcomplcon}. Even the posets and monotone maps among them have been characterized by principal congruences of lattices; for example, see Gr\" atzer~\cite{gG13,gGasm2016,gGprincII,gGprincIII}, Gr\"atzer and Lakser~\cite{gGhL17}, and Cz\'edli~\cite{czgaleph0,czgonemap,czgprincout,czgmanymaps,czgcometic}. Finally, the above-mentioned trends, focusing on the sizes of congruence lattices, on the structures formed by congruences, and on maps among these structures, have recently met in Cz\' edli and Mure\c san~\cite{gccm}, enriching the first two trends and related even to the third one.

The problem of the existence of lattices $L$ with certain values for the triples of cardinalities $(|{\rm Con}(L)|,|{\rm Filt}(L)|,$\linebreak $|{\rm Id}(L)|)$ was raised in Mure\c san~\cite{eucard,eunoucard}, and given the denomination of {\em CFI--representability} in Cz\' edli and Mure\c san~\cite{gccm}: with the notations above, in the finite case in the present paper, we say that a triple $(k,n,n)$ is {\em CFI--representable} iff $k\in {\rm NCL}(n)$, and we say that the elements of $\{L\ |\ L$ is a lattice with $|L|=n$ and $|{\rm Con}(L)|=k\}$ {\em CFI--represent} the triple $(k,n,n)$. For its simplicity, we choose the terminology of CFI--representability over the notations above in the following sections of this paper.

Regarding the smallest values in ${\rm NCL}(n)$, in Section \ref{thesmall} below, we prove that, if $n\geq 7$, then: $\{2^j\ |\ j\in \overline{1,n-1}\}\subset {\rm NCL}(n)$, and, if $n\neq 8$, then $\overline{2,n+1}\subset {\rm NCL}(n)$.

Regarding the largest values in ${\rm NCL}(n)$: by \cite{gcze,free}, ${\rm Gncl}(1,n)=2^{n-1}$ and ${\rm Lnc}(1,n)=\{{\cal L}_n\}$; by \cite{gcze}, if $n\geq 4$, then ${\rm Gncl}(2,n)=2^{n-2}$ and ${\rm Lnc}(2,n)=\{{\cal L}_r\dotplus {\cal L}_2^2\dotplus {\cal L}_{n-r-2}\ |\ r\in \overline{1,n-2}\}$. The four largest values in ${\rm NCSL}(n)=\{|{\rm Con}(L)|\ |\ L$ is a semilattice with $|L|=n\}$ and the structures of the semilattices with these numbers of congruences have been determined in \cite{gcz}.

In Section \ref{thelarge} of the present paper, using the methods of \cite{gcze}, we prove that: if $n\geq 5$, then  $|{\rm Gncl}(3,n)|=5\cdot 2^{n-5}$ and ${\rm Lnc}(3,n)=\{{\cal L}_r\dotplus N_5\dotplus {\cal L}_{n-r-3}\ |\ r\in \overline{1,n-4}\}$; if $n\geq 6$, then $|{\rm Gncl}(4,n)|=2^{n-3}$, ${\rm Lnc}(4,n)=\{{\cal L}_r\dotplus ({\cal L}_2\times {\cal L}_3)\dotplus {\cal L}_{n-r-4},{\cal L}_s\dotplus {\cal L}_2^2\dotplus {\cal L}_t\dotplus {\cal L}_2^2\dotplus {\cal L}_{n-s-t-4}\ |\ r\in \overline{1,n-5},s,t\in \N ^*,s+t\leq n-5\}$ and $|{\rm Gncl}(5,n)|=7\cdot 2^{n-6}$. The structures of the congruence lattices of the lattices from ${\rm Lnc}(3,n)$ and ${\rm Lnc}(4,n)$ follow from the previously mentioned results. We also conjecture that ${\rm Lnc}(5,n)=\{{\cal L}_r\dotplus ({\cal L}_3\boxplus {\cal L}_5)\dotplus {\cal L}_{n-r-4},{\cal L}_r\dotplus ({\cal L}_4\boxplus {\cal L}_4)\dotplus {\cal L}_{n-r-4}\ |\ r\in \overline{1,n-5}\}$; the methods of \cite{gcze} are probably adequate for determining ${\rm Lnc}(5,n)$, as well, but we do not pursue this proof here, as it would lenghthen our paper considerably.

\section{Definitions and Notations}
\label{preliminaries}

We shall denote by $\N $ the set of the natural numbers and by $\N ^*=\N \setminus \{0\}$. $\amalg $ shall be the disjoint union of sets. For any set $M$, $|M|$ shall be the cardinality of $M$, ${\rm Eq}(M)$ the set of the equivalences on $M$, $\Delta _M=\{(x,x)\ |\ x\in M\}$ and $\nabla _M=M^2$; for any partition $\pi $ of $M$, $eq(\pi )$ shall be the equivalence on $M$ that corresponds to $\pi $; if $\pi =\{M_1,\ldots ,M_n\}$ for some $n\in \N ^*$, then $eq(\pi )$ shall simply be denoted by $eq(M_1,\ldots ,M_n)$.

All lattices shall be non--empty and, unless mentioned otherwise, they shall be designated by their underlying sets, and their operations and order relation shall be denoted in the usual way, and $\prec $ shall denote their succession relation. The {\em trivial lattice} shall be the one--element lattice. $\cong $ shall denote the existence of a lattice isomorphism.

For any lattice $L$, $({\rm Con}(L),\vee ,\cap ,\Delta _L,\nabla _L)$ shall be the bounded lattice of the congruences of $L$ and ${\rm Filt}(L)$ and ${\rm Id}(L)$ shall be the lattices of the filters and ideals of $L$, respectively. For any $a\in L$, $[a)_L$ and $(a]_L$ shall be the principal filter, respectively ideal of $L$ generated by $a$. For any $a,b\in L$, $[a,b]_L=[a)_L\cap (b]_L$ shall be the interval of $L$ bounded by $a$ and $b$, which, of course, is non--empty iff $a\leq b$; recall that $[a,b]_L$ is called a {\em prime interval} iff $a\prec b$, and it is called a {\em narrows} iff it is a prime interval such that $a$ is meet--irreducible and $b$ is join--irreducible (see \cite{gratzer},\cite{gcze}). Following \cite{gcze}, we shall denote by ${\rm con}(a,b)$ the principal congruence of $L$ generated by $(a,b)$. If $L$ has a $0$, then $At(L)$ shall be the set of the atoms of $L$.

$\dotplus $ shall be the ordinal sum and $\boxplus $ shall be the horizontal sum. Recall that, for any lattice $(L,\leq _L,1_L)$ with largest element and any lattice $(M,\leq _M,0_M)$ with smallest element, the {\em ordinal sum} of $L$ with $M$ is defined by identifying $c=1_L=0_M\in L\cap M$ and letting $L\dotplus M=((L\setminus \{c\})\amalg \{c\}\amalg (M\setminus \{c\}),\leq _L\cup \leq _M\cup \{(x,y)\ |\ x\in L,y\in M\})$. Also, for any bounded lattices $(L,\leq _L,0_L,1_L)$ and $(M,\leq _M,0_M,1_M)$ with $|L|,|M|>2$, the {\em horizontal sum} of $L$ with $M$ is defined by identifying $0=0_L=0_M,1=1_L=1_M\in L\cap M$ and letting $L\boxplus M=((L\setminus \{0,1\})\amalg \{0,1\}\amalg (M\setminus \{0,1\}),\leq _L\cup \leq _M,0,1)$. Clearly, the ordinal sum of bounded lattices is associative, while the horizontal sum is both associative and commutative. For any $n\in \N ^*$, ${\cal L}_n$ shall denote the $n$--element chain, so that ${\cal L}_2^2={\cal L}_3\boxplus {\cal L}_3$ is the four--element Boolean algebra (the {\em rhombus}), $M_3={\cal L}_3\boxplus {\cal L}_3\boxplus {\cal L}_3$ is the five--element modular non--distributive lattice (the {\em diamond}) and $N_5={\cal L}_3\boxplus {\cal L}_4$ is the five--element non--modular lattice (the {\em pentagon}).

\section{Some Constructions and Their Effect on the Cardinalities of the Sets of Congruences, Filters and Ideals}

Following \cite{gccm}, we call a triple $(\kappa ,\lambda ,\mu )$ of nonzero cardinalities {\em CFI--representable} iff there exists a lattice $L$ such that $\kappa =|{\rm Con}(L)|$, $\lambda =|{\rm Filt}(L)|$ and $\mu =|{\rm Id}(L)|$, case in which we say that $L$ {\em CFI--represents} the triple $(\kappa ,\lambda ,\mu )$. Of course, if $L$ is finite, then all its filters and all its ideals are principal, thus $|{\rm Filt}(L)|=|{\rm Id}(L)|=|L|\in \N ^*$ and $|{\rm Con}(L)|\in \N ^*$, as well. So, if a triple $(\kappa ,\lambda ,\mu )$ is CFI--represented by a finite lattice, then $\lambda =\mu $ and they equal the cardinality of that lattice, and $\kappa $ is finite, as well. Most times, we shall use the remarks in this paper without referencing them.

\begin{remark} Let $L$ be a lattice and $\theta \in {\rm Con}(L)$. Clearly, if $S$ is a sublattice of $L$, then $\theta \cap S^2\in {\rm Con}(S)$. By \cite{gratzer}, for any $a\in L$, $a/\theta $ is a convex sublattice of $L$.\end{remark}

\begin{remark} Clearly, for any $t\in \N ^*$, if the lattices $L_1,L_2,\ldots ,L_t$ CFI--represent the cardinalities triples $(\kappa _1,\lambda _1,\mu _1),\ldots ,(\kappa _t,\lambda _t,\mu _t)$, respectively, then the lattice $\displaystyle \prod _{i=1}^tL_i$ CFI--represents $\displaystyle (\prod _{i=1}^t\kappa _i,\prod _{i=1}^t\lambda _i,\prod _{i=1}^t\mu _i)$, because $\displaystyle {\rm Con}(\prod _{i=1}^tL_i)\cong \prod _{i=1}^t{\rm Con}(L_i)$, $\displaystyle {\rm Filt}(\prod _{i=1}^tL_i)\cong \prod _{i=1}^t{\rm Filt}(L_i)$ and $\displaystyle {\rm Id}(\prod _{i=1}^tL_i)\cong \prod _{i=1}^t{\rm Id}(L_i)$.\label{cgprod}\end{remark}

\begin{remark} $(1,1,1)$ is CFI--represented by ${\cal L}_1$. For any $n\in \N ^*\setminus \{1\}$, if $(k,n,n)$ is CFI--representable, then $k\in \N ^*\setminus \{1\}$. ${\cal L}_2$ CFI--represents $(2,2,2)$ and, more generally, for any $s\in \N ^*$, the Boolean algebra ${\cal L}_2^s$ CFI--represents $(2^s,2^s,2^s)$. ${\cal L}_2^2$ CFI--represents $(4,4,4)$.\end{remark}

\begin{remark} It is immediate that, for any lattices $L$ with a $1$ and $M$ with a $0$: ${\rm Con}(L\dotplus M)=\{eq(L/\alpha \cup M/\beta )\ |\ \alpha \in {\rm Con}(L),\beta \in {\rm Con}(M)\}\cong {\rm Con}(L)\times {\rm Con}(M)$, ${\rm Filt}(L\dotplus M)={\rm Filt}(M)\cup \{F\cup M\ |\ F\in {\rm Filt}(L)\}={\rm Filt}(M)\cup \{F\cup M\ |\ F\in {\rm Filt}(L)\setminus \{1\}\}\cong {\rm Filt}(M)\dotplus {\rm Filt}(L)$ and ${\rm Id}(L\dotplus M)={\rm Id}(L)\cup \{I\cup L\ |\ I\in {\rm Id}(M)\}={\rm Id}(L)\cup \{I\cup L\ |\ I\in {\rm Id}(M)\setminus \{0\}\}\cong {\rm Id}(L)\dotplus {\rm Id}(M)$, hence, if $L$ CFI--represents $(\kappa ,\lambda ,\mu )$ and $M$ CFI--represents $(\nu ,\rho ,\sigma )$, then $L\dotplus M$ CFI--represents $(\kappa \cdot \nu ,\lambda +\rho -1,\mu +\sigma -1)$.

Therefore, more generally, for any $t\in \N ^*$, if the bounded lattices $L_1,L_2,\ldots ,L_t$ CFI--represent the triples $(\kappa _1,\lambda _1,\mu _1),\ldots ,(\kappa _t,\lambda _t,\mu _t)$, respectively, then $\displaystyle \dotplus _{i=1}^tL_i$ CFI--represents $\displaystyle (\prod _{i=1}^t\kappa _i,\sum _{i=1}^t\lambda _i-t+1,\sum _{i=1}^t\mu _i-t+1)$.

In particular, for any $n\in \N ^*$, since ${\rm Con}({\cal L}_2)\cong {\cal L}_2$ and ${\cal L}_n=\dotplus _{i=1}^{n-1}{\cal L}_2$, it follows that $\displaystyle {\rm Con}({\cal L}_n)\cong \prod _{i=1}^{n-1}{\rm Con}({\cal L}_2)\cong {\cal L}_2^{n-1}$, thus ${\cal L}_n$ CFI--represents $(2^{n-1},n,n)$. So ${\cal L}_3$ CFI--represents $(4,3,3)$ and ${\cal L}_4$ CFI--represents $(8,4,4)$. Also, if $L$ CFI--represents $(\kappa ,\lambda ,\mu )$, then $L\dotplus {\cal L}_2$ CFI--represents $(2\cdot \kappa ,\lambda +1,\mu +1)$ and, more generally, for any $s\in \N $, $L\dotplus {\cal L}_2^{s+1}$ CFI--represents $(2^s\cdot \kappa ,\lambda +s,\mu +s)$.\label{cgords}\end{remark}

\begin{remark} By the above, the only triples which are CFI--represented by lattices of cardinality at most $4$ are: $(1,1,1)$, $(2,2,2)$, $(4,3,3)$, $(4,4,4)$ and $(8,4,4)$.\label{nleq4}\end{remark}

\begin{remark} Let $t\in \N ^*$, $L_1,L_2,\ldots ,L_t$ be bounded lattices, not necessarily non--trivial, which CFI--represent the triples $(\kappa _1,\lambda _1,\mu _1),\ldots ,(\kappa _t,\lambda _t,\mu _t)$, respectively. Let $L=\boxplus _{i=1}^t({\cal L}_2\dotplus L_i\dotplus {\cal L}_2)$, so that, clearly: $\displaystyle {\rm Filt}(L)=\{\{1\},L\}\cup \{F\cup \{1\}\ |\ F\in \bigcup _{i=1}^t{\rm Filt}(L_i)\}$ and $\displaystyle {\rm Id}(L)=\{\{0\},L\}\cup \{I\cup \{0\}\ |\ I\in \bigcup _{i=1}^t{\rm Id}(L_i)\}$.

\begin{center}\begin{tabular}{cc}
\begin{picture}(60,65)(0,0)
\put(10,20){\circle*{3}}
\put(10,40){\circle*{3}}
\put(40,20){\circle*{3}}
\put(40,40){\circle*{3}}
\put(90,20){\circle*{3}}
\put(90,40){\circle*{3}}
\put(30,60){\circle*{3}}
\put(30,0){\circle*{3}}
\put(10,30){\circle{20}}
\put(40,30){\circle{20}}
\put(90,30){\circle{20}}
\put(5,27){$L_1$}
\put(35,27){$L_2$}
\put(85,27){$L_t$}
\put(60,30){$\ldots $}
\put(28,63){$1$}
\put(28,-9){$0$}
\put(-5,55){$L:$}
\put(30,0){\line(1,2){10}}
\put(30,0){\line(-1,1){20}}
\put(30,60){\line(1,-2){10}}
\put(30,60){\line(3,-1){60}}
\put(30,0){\line(3,1){60}}
\put(30,60){\line(-1,-1){20}}
\end{picture}
&\hspace*{100pt}
\begin{picture}(40,65)(0,0)
\put(20,0){\circle*{3}}
\put(20,40){\circle*{3}}
\put(0,20){\circle*{3}}
\put(30,10){\circle*{3}}
\put(30,30){\circle*{3}}
\put(18,-9){$0$}
\put(18,43){$1$}
\put(-7,18){$a$}
\put(33,7){$b$}
\put(33,28){$c$}
\put(20,0){\line(-1,1){20}}
\put(20,40){\line(1,-1){10}}
\put(20,0){\line(1,1){10}}
\put(0,20){\line(1,1){20}}
\put(20,40){\line(1,-1){10}}
\put(30,10){\line(0,1){20}}
\put(-10,35){$N_5:$}
\end{picture}\end{tabular}\end{center}\vspace*{-5pt}

By \cite{eunoucard}:\begin{itemize}
\item if $t=2$, then ${\rm Con}(L)=\{eq(\{0\}\cup L_1,\{1\}\cup L_2),eq(\{1\}\cup L_1,\{0\}\cup L_2),\nabla _L\}\cup \{eq(\{\{0\},\{1\}\}\cup L_1/\alpha \cup L_2/\beta )\ |\ \alpha \in {\rm Con}(L_1),\beta \in {\rm Con}(L_2)\}\cong ({\rm Con}(L_1)\times {\rm Con}(L_2))\dotplus {\cal L}_2^2$, hence $L$ CFI--represents $(\kappa _1\cdot \kappa _2+3,\lambda _1+\lambda _2+2,\mu _1+\mu _2+2)$; in particular, $N_5=({\cal L}_2\dotplus {\cal L}_1\dotplus {\cal L}_2)\boxplus ({\cal L}_2\dotplus {\cal L}_2\dotplus {\cal L}_2)$ CFI--represents $(5,5,5)$, since ${\rm Con}(N_5)\cong {\cal L}_2\dotplus {\cal L}_2^2$; 
\item if $t\geq 3$, then ${\rm Con}(L)=\{\nabla _L\}\cup \{eq(\{\{0\},\{1\}\}\cup L_1/\alpha _1\cup \ldots \cup L_t/\alpha _t)\ |\ (\forall \, i\in \overline{1,t})\, (\alpha _i\in {\rm Con}(L_i)\}$, hence $L$ CFI--represents $\displaystyle (1+\prod _{i=1}^t\kappa _i,2+\sum _{i=1}^t\lambda _i,2+\sum _{i=1}^t\mu _i)$; in particular, $M_3=({\cal L}_2\dotplus {\cal L}_1\dotplus {\cal L}_2)\boxplus ({\cal L}_2\dotplus {\cal L}_1\dotplus {\cal L}_2)\boxplus ({\cal L}_2\dotplus {\cal L}_1\dotplus {\cal L}_2)$ CFI--represents $(2,5,5)$.\end{itemize}

Also, in particular:\begin{itemize}
\item if $t\geq 3$ and $L_1=\ldots =L_t={\cal L}_1$, then $L=\boxplus _{i=1}^t{\cal L}_3$ CFI--represents $(2,t+2,t+2)$, hence $L\dotplus {\cal L}_2=(\boxplus _{i=1}^t{\cal L}_3)\dotplus {\cal L}_2$ CFI--represents $(4,t+3,t+3)$;
\item if $t\geq 3$, $L_1=\ldots =L_{t-1}={\cal L}_1$ and $L_t={\cal L}_2$, then $L=(\boxplus _{i=1}^{t-1}{\cal L}_3)\boxplus {\cal L}_4$ CFI--represents $(3,t+3,t+3)$, hence $L\dotplus {\cal L}_2=((\boxplus _{i=1}^{t-1}{\cal L}_3)\boxplus {\cal L}_4)\dotplus {\cal L}_2$ CFI--represents $(6,t+4,t+4)$;
\item if $t\geq 3$, $L_1=\ldots =L_{t-2}={\cal L}_1$ and $L_{t-1}=L_t={\cal L}_2$, then $L=(\boxplus _{i=1}^{t-2}{\cal L}_3)\boxplus {\cal L}_4\boxplus {\cal L}_4$ CFI--represents $(5,t+4,t+4)$;
\item if $t=2$ and $L_2={\cal L}_1$, then $L=({\cal L}_2\dotplus L_1\dotplus {\cal L}_2)\boxplus {\cal L}_3$ CFI--represents $(\kappa _1+3,\lambda _1+3,\mu _1+3)$; if we also have $L_1={\cal L}_2$, then $L\cong {\cal L}_4\boxplus {\cal L}_3\cong N_5$; let us note the congruences of the pentagon: with the elements denoted as in the Hasse diagram of the pentagon above, we have ${\rm Con}(N_5)=\{\Delta _{N_5},eq(\{0\},\{a\},\{b,c\},\{1\}),eq(\{0,b,c\},\{a,1\}),\linebreak eq(\{0,a\},\{b,c,1\}),\nabla _{N_5}\}$;
\item if $t=3$ and $L_2=L_3={\cal L}_1$, then $L=({\cal L}_2\dotplus L_1\dotplus {\cal L}_2)\boxplus {\cal L}_3\boxplus {\cal L}_3$ CFI--represents $(\kappa _1+1,\lambda _1+4,\mu _1+4)$.\end{itemize}

Note that the above also hold if we replace $t$ by an arbitrary nonzero cardinality, and, of course, the set $\overline{1,t}$ by an arbitrary non--empty set of cardinality $t$.

Thus, for all $k,n\in \N ^*$:\begin{itemize}
\item if $n\geq 5$, then $(2,n,n)$ is CFI--represented by $\boxplus _{i=1}^{n-2}{\cal L}_3$;
\item if $n\geq 6$, then $(3,n,n)$ and $(4,n,n)$ are CFI--represented by ${\cal L}_4\boxplus \boxplus _{i=1}^{n-4}{\cal L}_3$ and ${\cal L}_2\dotplus (\boxplus _{i=1}^{n-3}{\cal L}_3)$, respectively;
\item if $n\geq 7$, then $(5,n,n)$ and $(6,n,n)$ are represented by ${\cal L}_4\boxplus {\cal L}_4\boxplus \boxplus _{i=1}^{n-6}{\cal L}_3$ and ${\cal L}_2\dotplus ({\cal L}_4\boxplus \boxplus _{i=1}^{n-4}{\cal L}_3)$, respectively;
\item if $(k,n,n)$ is CFI--representable, then $(k+3,n+3,n+3)$ is CFI--representable;
\item if $(k,n,n)$ is CFI--representable, then $(k+1,n+4,n+4)$ is CFI--representable.\end{itemize}

More generally, the above hold for any nonzero cardinalities $k,n$.\label{cghs}\end{remark}

\begin{remark} If $M$ is a bounded lattice in which $0$ is meet--reducible and $L={\cal L}_3\boxplus (M\dotplus {\cal L}_2)$, then it is immediate that ${\rm Con}(L)=\{eq(M/\alpha \cup \{\{x\}\ |\ x\in L\setminus M\})\ |\ \alpha \in {\rm Con}(M)\}\cup \{eq(M,L\setminus M),\nabla _L\}$, thus $|{\rm Con}(L)|=|{\rm Con}(M)|+2$. See also \cite{eunoucard}.\vspace*{-5pt}

\begin{center}\begin{picture}(40,40)(0,0)
\put(20,0){\circle*{3}}
\put(20,40){\circle*{3}}
\put(0,20){\circle*{3}}
\put(40,20){\circle*{3}}
\put(18,-9){$0$}
\put(18,43){$1$}
\put(20,0){\line(-1,1){20}}
\put(20,40){\line(1,-1){20}}
\put(20,40){\line(-1,-1){20}}
\put(26,8){$M$}
\put(-10,35){$L:$}
\put(41,0){\oval(39,39)[tl]}
\put(21,20){\oval(39,39)[br]}\end{picture}\end{center}
\label{0red}\end{remark}

\section{On the Smallest Numbers of Congruences of Finite Lattices}
\label{thesmall}

\begin{proposition} Let $n\in \N $ such that $n\geq 7$. Then:\begin{enumerate}
\item\label{listcfirepr1} for any $j\in \overline{1,n-1}$, $(2^{j},n,n)$ is CFI--representable;
\item\label{listcfirepr2} if $n\neq 8$, then, for any $k\in \overline{2,n+1}$, $(k,n,n)$ is CFI--representable.
\end{enumerate}\label{listcfirepr}\end{proposition}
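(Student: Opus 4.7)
For part~(\ref{listcfirepr1}), I exhibit an $n$-element lattice with $2^j$ congruences by a case split on $j$. For $j\in \overline{1,n-4}$ I take ${\cal L}_j\dotplus \boxplus _{i=1}^{n-j-1}{\cal L}_3$: the horizontal sum has $n-j-1\geq 3$ summands, so by Remark~\ref{cghs} it CFI--represents $(2,n-j+1,n-j+1)$, and ordinal--summing with ${\cal L}_j$ multiplies the congruence count by $2^{j-1}$ (Remark~\ref{cgords}), giving $(2^j,n,n)$. For $j=n-3,n-2,n-1$ I use respectively $({\cal L}_2\times {\cal L}_3)\dotplus {\cal L}_{n-5}$ (with $|{\rm Con}({\cal L}_2\times {\cal L}_3)|=8$ by Remark~\ref{cgprod}), ${\cal L}_2^2\dotplus {\cal L}_{n-3}$, and ${\cal L}_n$; each has size $n$ and the required power of $2$ as its number of congruences by Remark~\ref{cgords}.

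For part~(\ref{listcfirepr2}), the small values $k\in \{2,3,4,5,6\}$ come directly from the explicit horizontal--sum lattices of Remark~\ref{cghs} (valid for $n\geq 7$), and every power $k=2^j$ with $j\in \overline{1,n-1}$ is covered by part~(\ref{listcfirepr1}). It remains to handle the non--power--of--$2$ values in $\overline{7,n+1}$. My main tool here is Remark~\ref{0red}: whenever $M$ is a bounded lattice with $|M|=n-2$, $|{\rm Con}(M)|=k-2$, and $0_M$ meet--reducible, the lattice ${\cal L}_3\boxplus (M\dotplus {\cal L}_2)$ CFI--represents $(k,n,n)$. The meet--reducibility condition is automatic whenever $M$ is itself a horizontal sum of at least two non--trivial bounded summands, since $0_M$ is then the meet of any two atoms drawn from distinct summands.

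I plan to organise the remaining cases by induction on $n$, with base cases $n\in \{7,9,10\}$ verified directly. In the inductive step at $n\geq 11$: for even $k\geq 8$ I take ${\cal L}_2\dotplus L'$ with $L'$ representing $(k/2,n-1,n-1)$ by the inductive hypothesis (applicable since $n-1\geq 10\neq 8$ and $k/2\leq n$); for odd $k\geq 7$ I apply Remark~\ref{0red} with $M$ representing $(k-2,n-2,n-2)$ by the inductive hypothesis (applicable since $n-2\geq 9\neq 8$). The odd--$k$ recursion bottoms out at small odd congruence counts $k-2\in \{3,5\}$ realised by the horizontal--sum lattices of Remark~\ref{cghs}, and the intermediate constructions are themselves of the form ${\cal L}_3\boxplus (\cdots )$, so the meet--reducibility of $0_M$ is preserved throughout the induction. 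For the base cases $n\in \{7,9,10\}$ I use, among other things, $(7,7,7)={\cal L}_3\boxplus (N_5\dotplus {\cal L}_2)$ from Remark~\ref{0red}, and the $(\kappa +1,\lambda +4,\lambda +4)$ operation of Remark~\ref{cghs} applied to ${\cal L}_4$, ${\cal L}_2\times {\cal L}_3$ and $N_5\dotplus {\cal L}_2$ to obtain $(9,8,8)$, $(9,10,10)$ and $(11,10,10)$ respectively (the $n=10$ cases $k\in \{9,11\}$ require this detour because the direct Remark~\ref{0red} route would try to call on the forbidden size $n-2=8$).

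The hardest part will be arranging the induction to avoid any appeal to CFI--representability at $n=8$: this is precisely the reason the proposition excludes $n=8$, since at that cardinality the value $k=7$ fails to be realised --- the Remark~\ref{0red} route demands a $6$--element lattice with $5$ congruences (none exists, as can be checked by enumerating $6$--element lattices), and no other combination of the constructions of Section~\ref{preliminaries} produces $7$ congruences on $8$ elements. Choosing the base cases at $n\in \{7,9,10\}$ ensures that every inductive step at $n\geq 11$ only invokes the inductive hypothesis at $n-1\geq 10$ and $n-2\geq 9$, so the forbidden size $8$ is safely bypassed.
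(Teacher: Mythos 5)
Your plan is correct, but it takes a genuinely different route from the paper's proof. For part (\ref{listcfirepr1}) the paper exhibits six explicit $7$--element lattices realising $2,4,\ldots ,64$ congruences and then inducts on $n$ by appending ${\cal L}_2$; you instead give the uniform family ${\cal L}_j\dotplus \boxplus _{i=1}^{n-j-1}{\cal L}_3$ for $j\leq n-4$ plus three ad hoc lattices for $j\in \{n-3,n-2,n-1\}$, which is a clean closed--form alternative and checks out. For part (\ref{listcfirepr2}) the paper's engine is the operation $L\mapsto ({\cal L}_2\dotplus L\dotplus {\cal L}_2)\boxplus {\cal L}_3$ of Remark \ref{cghs}, sending $(k,n,n)$ to $(k+3,n+3,n+3)$; this reduces everything to the base sizes $n\in \{7,9,11\}$, where the values $k\in \overline{7,n+1}$ are handled by explicit lattices. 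Your engine is instead ``halve $k$, drop $n$ by one'' for even $k$ together with Remark \ref{0red}'s step $(k-2,n-2,n-2)\mapsto (k,n,n)$ for odd $k$, with base sizes $\{7,9,10\}$. The one substantive obligation your route creates that the paper's does not is the side condition of Remark \ref{0red}: you must carry through the induction the strengthened statement that every odd value in range is realised by a lattice whose $0$ is meet--reducible. You do address this correctly --- the odd--$k$ chain only ever feeds odd counts into Remark \ref{0red}, and every lattice you produce for an odd count is a horizontal sum (either directly from Remark \ref{cghs}, or of the form ${\cal L}_3\boxplus (M\dotplus {\cal L}_2)$, or of the form $(\cdots )\boxplus {\cal L}_3\boxplus {\cal L}_3$ from the $(\kappa +1,\lambda +4,\mu +4)$ operation), so its $0$ has two incomparable covers --- but in a written--up version this strengthened induction hypothesis must be stated explicitly, since the proposition as formulated does not supply it. Your detours at $n=10$, $k\in \{9,11\}$ correctly sidestep the unavailable triple $(7,8,8)$. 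One small caveat: the parenthetical assertion that no $8$--element lattice has $7$ congruences is not verified by you (nor by the paper, which merely refrains from proving representability at $n=8$); it is only motivation for excluding $n=8$ and plays no role in the actual argument, so it should be flagged as such or omitted.
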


\begin{proof} (\ref{listcfirepr1}) For all $n\geq 8\geq 5$, $(2,n,n)$ is CFI--representable and, if $j\in \N ^*$ is such that a lattice $L$ CFI--represents $(2^j,n-1,n-1)$, then $L\dotplus {\cal L}_2$ CFI--represents $(2^{j+1},n,n)$. For $n=7$, $2^{n-1}=2^6=64$, and we have:

$(2,7,7)$ is CFI--represented by ${\cal L}_3\boxplus {\cal L}_3\boxplus {\cal L}_3\boxplus {\cal L}_3\boxplus {\cal L}_3$;

$(4,7,7)$ is CFI--represented by $({\cal L}_3\boxplus {\cal L}_3\boxplus {\cal L}_3\boxplus {\cal L}_3)\dotplus {\cal L}_2$;

$(8,7,7)$ is CFI--represented by $M_3\dotplus {\cal L}_3$;

$(16,7,7)$ is CFI--represented by ${\cal L}_2^2\dotplus {\cal L}_2^2$;

$(32,7,7)$ is CFI--represented by ${\cal L}_2^2\dotplus {\cal L}_4$;

$(64,7,7)$ is CFI--represented by ${\cal L}_7$.

Now an easy induction argument proves (\ref{listcfirepr1}).

\noindent (\ref{listcfirepr2}) For any $n\in \N $ with $n\geq 7$, $(2,n,n)$, $(3,n,n)$ and $(4,n,n)$ are CFI--representable, and, if $(k,n,n)$ is CFI--representable for some $k\in \N ^*$, then $(k+3,n+3,n+3)$ is CFI--representable, thus, if $(k,n,n)$ is CFI--representable for any $k\in \overline{2,n}$, then $(k,n+3,n+3)$ is CFI--representable for any $k\in \overline{2,n+3}$. Thus it suffices to prove that, for any $n\in \{7,9,11\}$ and any $k\in \overline{2,n+1}$, $(k,n,n)$ is CFI--representable; then (\ref{listcfirepr2}) follows by induction. Actually, by the above and the fact that, furthermore, for any $n\in \N $ with $n\geq 7$, $(5,n,n)$ and $(6,n,n)$ are CFI--representable, as well, it remains to prove  that, for any $n\in \{7,9,11\}$ and any $k\in \overline{7,n+1}$, $(k,n,n)$ is CFI--representable.

Since ${\cal L}_2^2$ CFI--represents $(4,4,4)$, it follows that ${\cal L}_3\boxplus ({\cal L}_2\dotplus {\cal L}_2^2\dotplus {\cal L}_2)$ CFI--represents $(4+3,4+3,4+3)=(7,7,7)$.

Since $M_3\dotplus {\cal L}_2$ CFI--represents $(4,6,6)$, it follows that ${\cal L}_3\boxplus ({\cal L}_2\dotplus M_3\dotplus {\cal L}_2\dotplus {\cal L}_2)={\cal L}_3\boxplus ({\cal L}_2\dotplus M_3\dotplus {\cal L}_3)$ CFI--represents $(4+3,6+3,6+3)=(7,9,9)$.

Since $\boxplus _{i=1}^4{\cal L}_3$ CFI--represents $(2,6,6)$ and ${\cal L}_2^2$ CFI--represents $(4,4,4)$, it follows that $(\boxplus _{i=1}^4{\cal L}_3)\dotplus {\cal L}_2^2$ CFI--represents $(2\cdot 4,6+4-1,6+4-1)=(8,9,9)$.

And $(8,7,7)$ is CFI--represented by $M_3\dotplus {\cal L}_3$.

Since ${\cal L}_2^2\dotplus {\cal L}_2$ CFI--represents $(8,5,5)$, it follows that ${\cal L}_3\boxplus ({\cal L}_2\dotplus {\cal L}_2^2\dotplus {\cal L}_2\dotplus {\cal L}_2)={\cal L}_3\boxplus ({\cal L}_2\dotplus {\cal L}_2^2\dotplus {\cal L}_3)$ CFI--represents $(8+1,5+4,5+4)=(9,9,9)$.

$(7,6,6)$ is CFI--represented by ${\cal L}_4\boxplus {\cal L}_4$ (as well as ${\cal L}_3\boxplus {\cal L}_5$), hence, for instance, ${\cal L}_3\boxplus ({\cal L}_2\dotplus ({\cal L}_4\boxplus {\cal L}_4)\dotplus {\cal L}_2)$ CFI--represents $(7+3,6+3,6+3)=(10,9,9)$.

Since $(6,7,7)$, $(7,7,7)$ and $(8,7,7)$ are CFI--representable by the above and (\ref{listcfirepr1}), it follows that $(6+1,7+4,7+4)=(7,11,11)$, $(7+1,7+4,7+4)=(8,11,11)$ and $(8+1,7+4,7+4)=(9,11,11)$ are CFI--representable.

Since $\boxplus _{i=1}^5{\cal L}_3$ CFI--represents $(2,7,7)$ and $N_5$ CFI--represents $(5,5,5)$, it follows that $(\boxplus _{i=1}^5{\cal L}_3)\dotplus N_5$ CFI--represents $(2\cdot 5,7+5-1,7+5-1)=(10,11,11)$.

Since $N_5$ CFI--represents $(5,5,5)$, it follows that ${\cal L}_3\boxplus ({\cal L}_2\dotplus ({\cal L}_3\boxplus ({\cal L}_2\dotplus N_5\dotplus {\cal L}_2))\dotplus {\cal L}_2)$ CFI--represents $(5+3+3,5+3+3,5+3+3)=(11,11,11)$.

${\cal L}_4\boxplus {\cal L}_3\boxplus {\cal L}_3$, $M_3$ and ${\cal L}_2$ CFI--represent $(3,6,6)$, $(2,5,5)$ and $(2,2,2)$, respectively, hence $({\cal L}_4\boxplus {\cal L}_3\boxplus {\cal L}_3)\dotplus M_3\dotplus {\cal L}_2$ CFI--represents $(3\cdot 2\cdot 2,6+5-3+1,6+5-3+1)=(12,11,11)$.\end{proof}

\begin{remark} Many results can be derived from Proposition \ref{listcfirepr}. For instance, using ordinal sums, which, of course, we can iterate, to obtain more results, we get that, for any $n,m,l\in \N ^*$ such that $(l,m,m)$ is CFI--representable and $n\geq 7$:\begin{enumerate}
\item for any $j\in \overline{1,n-1}$, $(2^{j}\cdot l,n+m-1,n+m-1)$ is CFI--representable;
\item if $n\neq 8$, then, for any $k\in \overline{2,n+1}$, $(k\cdot l,n+m-1,n+m-1)$ is CFI--representable.
\end{enumerate}

Thus, for instance, if $n\in \N $ is such that $n\geq 7$ and $n\neq 8$, then, for any $k\in \overline{2,n+1}$, $(k^2,2n-1,2n-1)$ is CFI--representable and, more generally, $(k^s,sn-s+1,sn-s+1)$ is CFI--representable for any $s\in \N ^*$.\label{mult2}\end{remark}

\section{On the Largest Numbers of Congruences of Finite Lattices}
\label{thelarge}

Let $n\in \N ^*$ and $L$ be a lattice with $|L|=n$.

\begin{remark} Since $L$ is finite, its meet--irreducibles are strictly meet--irreducible, its join--irreducibles are strictly join--irreducible, and, for any $u,v\in L$ with $u<v$, $[u,v]_L$ contains at least one successor of $u$ and one predecessor of $v$. So, for any $a,b\in L$, $[a,b]_L$ is a narrows iff $a\prec b$, $b$ is the unique successor of $a$ and $a$ is the unique predecessor of $b$ in $L$.\label{lfin}\end{remark}

\begin{lemma}{\rm \cite{gcze}} If $L$ is non--trivial, then:\begin{enumerate}
\item\label{lgcze1} $\emptyset \neq At({\rm Con}(L))\subseteq \{{\rm con}(a,b)\ |\ a,b\in L,a\prec b\}$;
\item\label{lgcze2} for any $\theta \in At({\rm Con}(L))$, $|{\rm Con}(L/\theta )|\geq |{\rm Con}(L)|/2$;
\item\label{lgcze3} for any $a,b\in L$ such that $a\prec b$: $[a,b]_L$ is a narrows iff $L/{\rm con}(a,b)=\{\{a,b\}\}\cup \{\{x\}\ |\ x\in L\setminus \{a,b\}\}$ iff $|L/{\rm con}(a,b)|=|L|-1$;
\item\label{lgcze4} for any $a,b\in L$ such that $a\prec b$ and $|L/{\rm con}(a,b)|=|L|-2$, we have one of the following situations:\begin{itemize}
\item $a$ is meet--reducible, case in which $a\prec c$ for some $c\in L\setminus \{b\}$ such that $b\prec b\vee c$, $c\prec b\vee c$ and $L/{\rm con}(a,b)=\{\{a,b\},\{c,b\vee c\}\}\cup \{\{x\}\ |\ x\in L\setminus \{a,b,c,b\vee c\}\}$;
\item $b$ is join--reducible, case in which, dually, $c\prec b$ for some $c\in L\setminus \{a\}$ such that $a\wedge c\prec a$, $a\wedge c\prec c$ and $L/{\rm con}(a,b)=\{\{b\wedge c,c\},\{a,b\}\}\cup \{\{x\}\ |\ x\in L\setminus \{b\wedge c,c,a,b\}\}$.\end{itemize}\end{enumerate}\label{lgcze}\end{lemma}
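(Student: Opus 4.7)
The plan is to treat the four items in order, leveraging the finiteness of $L$, the distributivity of ${\rm Con}(L)$ (Funayama--Nakayama), and basic perspectivity arguments for prime intervals.

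For (\ref{lgcze1}): since $L$ is non-trivial and finite, ${\rm Con}(L)$ is a non-trivial finite bounded lattice and therefore has atoms. Given $\theta \in At({\rm Con}(L))$, pick $(x,y)\in \theta $ with $x<y$ and, using Remark~\ref{lfin}, choose a prime subinterval $[a,b]_L\subseteq [x,y]_L$. Since $\theta $-classes are convex sublattices, $a,b\in x/\theta $, hence $\Delta _L\subsetneq {\rm con}(a,b)\subseteq \theta $; atomicity of $\theta $ forces ${\rm con}(a,b)=\theta $.

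For (\ref{lgcze2}): ${\rm Con}(L)$ is distributive, so a standard computation shows that the map $\alpha \mapsto (\alpha \wedge \theta ,\alpha \vee \theta )$ is injective. Because $\theta $ is an atom, $\alpha \wedge \theta \in \{\Delta _L,\theta \}$, so $\alpha \mapsto \alpha \vee \theta $ has fibres of size at most $2$. Its image is the filter $[\theta ,\nabla _L]\cong {\rm Con}(L/\theta )$ (correspondence theorem), yielding $|{\rm Con}(L/\theta )|\geq |{\rm Con}(L)|/2$.

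For (\ref{lgcze3}): one always has $|L/{\rm con}(a,b)|\leq |L|-1$, with equality iff the equivalence $\rho $ whose only non-singleton class is $\{a,b\}$ is itself a congruence. If $[a,b]_L$ is a narrows, the fact that $b$ is the unique upper cover of $a$ forces $b\leq a\vee z$ whenever $z\not\leq a$, hence $a\vee z=b\vee z$; the dual holds for meets, so $\rho $ is a congruence equal to ${\rm con}(a,b)$. Conversely, if $a$ has a cover $c\neq b$ (the join-reducible case for $b$ is dual), then $c<b\vee c$ (else $a\prec b\leq c$ would contradict $a\prec c$), and the congruence rule applied to $(a,b)$ with $c$ produces $(c,b\vee c)\in {\rm con}(a,b)$, a second non-trivial collapsed pair, forcing $|L/{\rm con}(a,b)|\leq |L|-2$.

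For (\ref{lgcze4}): by (\ref{lgcze3}), $[a,b]_L$ is not a narrows, so $a$ is meet-reducible or $b$ is join-reducible; treat the former, the latter being dual. Pick $c\neq b$ with $a\prec c$. From $a\prec b$ and $a\prec c$ one derives $b\wedge c=a$ and $c<b\vee c$, and applying the congruence rule gives $(c,b\vee c)\in {\rm con}(a,b)$. The disjoint pairs $\{a,b\}$ and $\{c,b\vee c\}$ already yield $|L/{\rm con}(a,b)|\leq |L|-2$, so the hypothesized equality forces the quotient to consist exactly of these two size-two classes plus singletons; convexity of the $c$-class yields $c\prec b\vee c$. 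The main obstacle is to show $b\prec b\vee c$: supposing $b<d<b\vee c$, applying the congruence rule to $(c,b\vee c)$ with $d$ gives $(c\wedge d,d)\in {\rm con}(a,b)$; one checks $a\leq c\wedge d\leq c$, while $c\wedge d=c$ would give $b\vee c\leq d$, so $c\wedge d\in [a,c]=\{a,c\}$ forces $c\wedge d=a$, placing $d$ in the class $\{a,b\}$ and contradicting $d>b$. A similar counting argument, applied to a further cover of $a$ or a lower cover of $b$ distinct from $a$, shows that $a$ has no third cover and that $b$ must be join-irreducible in this case, so the two listed configurations are mutually exclusive.
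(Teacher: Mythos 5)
Your proof is correct. Note that the paper does not actually prove this lemma -- it is imported from \cite{gcze} -- but the partial justifications it does supply in Remark \ref{onlgcze} (the substitution-rule computations showing $(c,b\vee c)=(a\vee c,b\vee c)\in{\rm con}(a,b)$ and the resulting description of the quotient in parts (\ref{lgcze3}) and (\ref{lgcze4})) coincide with yours, and your argument for (\ref{lgcze2}) via distributivity of ${\rm Con}(L)$ and the injectivity of $\alpha\mapsto(\alpha\wedge\theta,\alpha\vee\theta)$ is the standard one from \cite{gcze}. The extra observation at the end of your (\ref{lgcze4}) (mutual exclusivity of the two configurations) is more than the statement asks for, but it is correctly justified by the same counting.
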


\begin{remark} Let $a,b\in L$ with $a\neq b$. Also, let $\theta \in {\rm Con}(L)$. If $a\prec b$ and $a/\theta \neq b/\theta $, then, clearly, $a/\theta \prec b/\theta $. If $a/\theta \prec b/\theta $, then there exists no $u\in [a,b]_L\setminus (a/\theta \cup b/\theta )$, because otherwise we would have $a/\theta <u/\theta <b/\theta $. Let us also note that $a/\theta \leq b/\theta $ iff $a\vee b\in b/\theta $ iff $a\wedge b\in a/\theta $ iff $a\leq x$ for some $x\in b/\theta $ iff $w\leq b$ for some $w\in a/\theta $.

By Lemma \ref{lgcze}, (\ref{lgcze3}), if $[a,b]_L$ is a narrows, then ${\rm con}(a,b)$ collapses a single pair of elements, thus, clearly, ${\rm con}(a,b)\in At({\rm Con}(L))$. Since $a/{\rm con}(a,b)=b/{\rm con}(a,b)$, we have $|L/{\rm con}(a,b)|\leq |L|-1$, hence the second equivalence in Lemma \ref{lgcze}, (\ref{lgcze3}), is clear.

By Lemma \ref{lgcze}, (\ref{lgcze3}), if $|L/{\rm con}(a,b)|<|L|-1$, as in Lemma \ref{lgcze}, (\ref{lgcze4}), then $[a,b]_L$ is not a narrows, hence $a$ is meet--reducible, so that $a$ has a successor different from $b$, or $b$ is join--reducible, so that $b$ has a predecessor different from $a$, by Remark \ref{lfin}. With the notations in Lemma \ref{lgcze}, (\ref{lgcze4}), if $|L|-|L/{\rm con}(a,b)|=2$ and, for instance, $a$ is meet--reducible, then, simply, the fact that $(a,b),(c,b\vee c)=(a\vee c,b\vee c)\in {\rm con}(a,b)$ implies that $L/{\rm con}(a,b)=\{\{a,b\},\{c,b\vee c\}\}\cup \{\{x\}\ |\ x\in L\setminus \{a,b,c,b\vee c\}\}$, with $a/{\rm con}(a,b)\neq x/{\rm con}(a,b)\neq c/{\rm con}(a,b)$ for all $x\in L\setminus \{a,b,c,b\vee c\}$ and $a/{\rm con}(a,b)\neq c/{\rm con}(a,b)$, which, along with the fact that $a\prec c$, as above, proves that $a/{\rm con}(a,b)\prec c/{\rm con}(a,b)=(b\vee c)/{\rm con}(a,b)$.\label{onlgcze}\end{remark}

\begin{remark} Remark \ref{nleq4} and the fact that $2^{1-1}=1=2^{2-2}$, $2^{2-1}=2=2^{3-2}$ and $2^{3-1}=4=2^{4-2}$ give us:\begin{itemize}
\item if $|{\rm Con}(L)|<2^{n-1}$, then $n\geq 4$;
\item if $|{\rm Con}(L)|<2^{n-2}$, then $n\geq 5$.\end{itemize}\label{ngeq}\end{remark}

\begin{theorem}\begin{enumerate}
\item{\rm \cite{free,gcze}}\label{gczetgh1} $|{\rm Con}(L)|\leq 2^{n-1}$ and: $|{\rm Con}(L)|=2^{n-1}$ iff $L\cong {\cal L}_n$.
\item{\rm \cite{gcze}}\label{gczetgh2} if $|{\rm Con}(L)|<2^{n-1}$, then $|{\rm Con}(L)|\leq 2^{n-2}$ and: $|{\rm Con}(L)|=2^{n-2}$ iff $L\cong {\cal L}_k\dotplus {\cal L}_2^2\dotplus {\cal L}_{n-k-2}$ for some $k\in \overline{1,n-3}$.\end{enumerate}\label{gczetgh}\end{theorem}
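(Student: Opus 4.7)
My plan is to prove both parts together by strong induction on $n=|L|$, with the base cases $n\leq 4$ handled by Remark~\ref{nleq4}. The central tool is Lemma~\ref{lgcze}: I pick an atom $\theta={\rm con}(a,b)\in At({\rm Con}(L))$, which exists by part~(\ref{lgcze1}), and use the key inequality $|{\rm Con}(L)|\leq 2\cdot |{\rm Con}(L/\theta)|$ from part~(\ref{lgcze2}). The argument will split on whether $[a,b]_L$ is a narrows, and I will repeatedly exploit Remark~\ref{onlgcze} in order to ``un--collapse'' $\{a,b\}$ from $L/\theta$ back to $L$.

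For Part~(\ref{gczetgh1}), if $[a,b]_L$ is a narrows, then Lemma~\ref{lgcze}(\ref{lgcze3}) gives $|L/\theta|=n-1$ and induction yields $|{\rm Con}(L/\theta)|\leq 2^{n-2}$, hence $|{\rm Con}(L)|\leq 2^{n-1}$; otherwise $|L/\theta|\leq n-2$, which produces a strictly smaller bound. For equality, both estimates must be tight, so $L/\theta\cong {\cal L}_{n-1}$ and $[a,b]_L$ is a narrows; since all $\theta$--classes other than $\{a,b\}$ are singletons and $L/\theta$ is a chain, the narrows condition (via Remark~\ref{lfin}: $a$ has unique upper cover $b$, $b$ has unique lower cover $a$, so any $x<b$ lies below $a$ and any $x>a$ lies above $b$) shows every $x\in L\setminus \{a,b\}$ is comparable with both $a$ and $b$, forcing $L\cong {\cal L}_n$.

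For Part~(\ref{gczetgh2}), assume $|{\rm Con}(L)|<2^{n-1}$, so $L\not\cong {\cal L}_n$ by Part~(\ref{gczetgh1}). If $[a,b]_L$ is not a narrows, then $|L/\theta|\leq n-2$ and Part~(\ref{gczetgh1}) applied to $L/\theta$ gives $|{\rm Con}(L/\theta)|\leq 2^{n-3}$, hence $|{\rm Con}(L)|\leq 2^{n-2}$. If $[a,b]_L$ is a narrows, then $|L/\theta|=n-1$; the subcase $|{\rm Con}(L/\theta)|=2^{n-2}$ would force $L/\theta\cong {\cal L}_{n-1}$ and then $L\cong {\cal L}_n$ (by the argument in Part~(\ref{gczetgh1})), contradicting the hypothesis, so the induction hypothesis for Part~(\ref{gczetgh2}) applied to $L/\theta$ gives $|{\rm Con}(L/\theta)|\leq 2^{n-3}$, and again $|{\rm Con}(L)|\leq 2^{n-2}$.

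For the equality characterisation in Part~(\ref{gczetgh2}), suppose $|{\rm Con}(L)|=2^{n-2}$, so $|{\rm Con}(L/\theta)|=2^{n-3}$ with the doubling in Lemma~\ref{lgcze}(\ref{lgcze2}) tight. If $[a,b]_L$ is not a narrows, Part~(\ref{gczetgh1}) forces $L/\theta\cong {\cal L}_{n-2}$, and Lemma~\ref{lgcze}(\ref{lgcze4}) shows that the two collapsed two--element classes fit together, when un--collapsed, as a rhombus ${\cal L}_2^2$ inserted into the chain, yielding $L\cong {\cal L}_k\dotplus {\cal L}_2^2\dotplus {\cal L}_{n-k-2}$. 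If $[a,b]_L$ is a narrows, the induction hypothesis gives $L/\theta\cong {\cal L}_j\dotplus {\cal L}_2^2\dotplus {\cal L}_{n-j-3}$; un--collapsing $\{a,b\}$ at an element of one of the chain parts again yields the required form, whereas un--collapsing at a middle element of the ${\cal L}_2^2$ block would instead produce $L\cong {\cal L}_j\dotplus N_5\dotplus {\cal L}_{n-j-3}$, which by Remark~\ref{cgords} has $2^{j-1}\cdot 5\cdot 2^{n-j-4}=5\cdot 2^{n-5}<2^{n-2}$ congruences, contradicting the assumption. The converse, that every ${\cal L}_k\dotplus {\cal L}_2^2\dotplus {\cal L}_{n-k-2}$ has exactly $2^{n-2}$ congruences, follows directly from Remark~\ref{cgords}. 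The main obstacle is this last un--collapsing analysis: using Remark~\ref{onlgcze}, I must track which upper covers of $a/\theta$ in $L/\theta$ become upper covers of $b$ in $L$ (and dually for lower covers of $a$), then combine this with the narrows condition (Remark~\ref{lfin}) to identify the resulting structure in each subcase and to rule out those that do not match the required cardinality.
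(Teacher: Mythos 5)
The paper states this theorem without proof, citing \cite{free} and \cite{gcze}, so the relevant comparison is with the method the paper itself replicates (``following the line of the proof from \cite{gcze}'') for Theorem \ref{nextcgno}; your argument is correct and is essentially that same induction run one level higher: choose an atom $\theta={\rm con}(a,b)$ of ${\rm Con}(L)$, use $|{\rm Con}(L)|\leq 2\,|{\rm Con}(L/\theta )|$, split on whether $[a,b]_L$ is a narrows, and un--collapse the nontrivial $\theta $--classes to recover $L$ from $L/\theta $. All of your case distinctions --- forcing $L/\theta $ to be a chain for the extremal values, the rhombus configuration supplied by Lemma \ref{lgcze}(\ref{lgcze4}) in the non--narrows case, and discarding the $N_5$ un--collapse via the count $5\cdot 2^{n-5}<2^{n-2}$ --- match the corresponding subcases of the paper's proof of Theorem \ref{nextcgno}.
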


Following the line of the proof from \cite{gcze} of Theorem \ref{gczetgh}, now we prove:

\begin{theorem} If $|{\rm Con}(L)|<2^{n-2}$, then $n\geq 5$ and:\begin{enumerate}
\item\label{nextcgno1} $|{\rm Con}(L)|\leq 5\cdot 2^{n-5}=2^{n-3}+2^{n-5}$ and: $|{\rm Con}(L)|=5\cdot 2^{n-5}$ iff $L\cong {\cal L}_k\dotplus N_5\dotplus {\cal L}_{n-k-3}$ for some $k\in \overline{1,n-4}$;
\item\label{nextcgno2} if $|{\rm Con}(L)|<5\cdot 2^{n-5}$, then $|{\rm Con}(L)|\leq 2^{n-3}$ and: $|{\rm Con}(L)|=2^{n-3}$ iff either $n\geq 6$ and $L\cong {\cal L}_k\dotplus ({\cal L}_2\times {\cal L}_3)\dotplus {\cal L}_{n-k-4}$ for some $k\in \overline{1,n-5}$, or $n\geq 7$ and $L\cong {\cal L}_k\dotplus {\cal L}_2^2\dotplus {\cal L}_m\dotplus {\cal L}_2^2\dotplus {\cal L}_{n-k-m-4}$ for some $k,m\in \N ^*$ such that $k+m\leq n-5$;
\item\label{nextcgno3} if $|{\rm Con}(L)|<2^{n-3}$, then $|{\rm Con}(L)|\leq 7\cdot 2^{n-6}=2^{n-4}+2^{n-5}+2^{n-6}$.\end{enumerate}\label{nextcgno}\end{theorem}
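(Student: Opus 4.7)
The plan is to argue by induction on $n$, adapting the atom-quotient method used by Freese and Cz\'edli for Theorem \ref{gczetgh} (see \cite{gcze}). By Remark \ref{ngeq}, the hypothesis $|{\rm Con}(L)|<2^{n-2}$ already forces $n\geq 5$. The base case $n=5$ reduces to inspection of $5$-element lattices, among which only $N_5$ attains $|{\rm Con}|=5=5\cdot 2^{n-5}$. For the inductive step, apply Lemma \ref{lgcze}(\ref{lgcze1}) to pick $\theta={\rm con}(a,b)\in {\rm At}({\rm Con}(L))$ with $a\prec b$, set $L'=L/\theta $ and $n'=|L'|$, and exploit the doubling bound $|{\rm Con}(L)|\leq 2\cdot |{\rm Con}(L')|$ from Lemma \ref{lgcze}(\ref{lgcze2}). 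The argument then splits by cases on $n-n'$, using Lemma \ref{lgcze}(\ref{lgcze3})--(\ref{lgcze4}) to describe the collapsed block and Theorem \ref{gczetgh} or the inductive hypothesis to describe $L'$.

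If $n'=n-1$ (the narrows case), Theorem \ref{gczetgh} classifies $L'$: the possibility $L'\cong {\cal L}_{n-1}$ would force $L\cong {\cal L}_n$ (splitting a narrows of a chain yields a chain), contradicting the hypothesis; if $L'\cong {\cal L}_r\dotplus {\cal L}_2^2\dotplus {\cal L}_{n-r-3}$ then the narrows must refine an edge of the rhombus (else $L$ is again of the extremal form of Theorem \ref{gczetgh}(\ref{gczetgh2}), contradiction), and a short direct check shows that splitting a prime interval of ${\cal L}_2^2$ produces an $N_5$, hence $L\cong {\cal L}_r\dotplus N_5\dotplus {\cal L}_{n-r-3}$ with $|{\rm Con}(L)|=5\cdot 2^{n-5}$; otherwise $|{\rm Con}(L')|<2^{n-3}$, and the inductive hypothesis applied to $L'$ yields $|{\rm Con}(L)|\leq 5\cdot 2^{n-5}$. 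If $n'=n-2$, Lemma \ref{lgcze}(\ref{lgcze4}) identifies the four collapsed elements as a rhombus sublattice of $L$; tracking how this rhombus sits inside $L'$ and again invoking Theorem \ref{gczetgh} or the inductive hypothesis to $L'$ produces, exactly when $L'\cong {\cal L}_k\dotplus {\cal L}_2^2\dotplus {\cal L}_{n-k-4}$ and the collapsed block straddles a prime interval in the chain portion, the two--rhombus family ${\cal L}_s\dotplus {\cal L}_2^2\dotplus {\cal L}_t\dotplus {\cal L}_2^2\dotplus {\cal L}_{n-s-t-4}$ of part (\ref{nextcgno2}). Finally, if $n'\leq n-3$, the crude bound $|{\rm Con}(L)|\leq 2\cdot 2^{n'-1}\leq 2^{n-3}$ already settles (\ref{nextcgno1}) and gives the target inequality in (\ref{nextcgno2}); equality in (\ref{nextcgno2}) forces $n'=n-3$, $L'\cong {\cal L}_{n-3}$, and every congruence of $L'$ lifting in exactly two ways, which in turn pins $L$ down to the ${\cal L}_2\times {\cal L}_3$ family of (\ref{nextcgno2}).

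The principal obstacle is the structural uniqueness in parts (\ref{nextcgno1}) and (\ref{nextcgno2}): proving that the values $5\cdot 2^{n-5}$ and $2^{n-3}$ are attained \emph{only} by the specific ordinal--sum decompositions listed, with no other configuration coincidentally reaching the same total. This requires a systematic enumeration of the positions the $2$-- or $4$--element block collapsed by $\theta $ can occupy within the extremal form of $L'$ supplied by Theorem \ref{gczetgh}, together with a verification that every ``off--chain'' placement of the inserted pentagon or rhombus strictly decreases $|{\rm Con}(L)|$, using the full strength of Remark \ref{onlgcze} on how prime intervals and their covers behave modulo $\theta $. Part (\ref{nextcgno3}) should then follow with comparatively routine bookkeeping: the hypothesis $|{\rm Con}(L)|<2^{n-3}$ excludes the two families characterized in (\ref{nextcgno2}), and a sharpened recount inside each of the three cases above yields the bound $7\cdot 2^{n-6}=2^{n-3}-2^{n-6}$.
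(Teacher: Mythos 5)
Your plan follows the paper's own proof essentially verbatim: induction on $n$, an atom $\theta ={\rm con}(a,b)$ with $a\prec b$ from Lemma \ref{lgcze}~(\ref{lgcze1}), the doubling bound of Lemma \ref{lgcze}~(\ref{lgcze2}), a case split on $|L|-|L/\theta |$ governed by Lemma \ref{lgcze}~(\ref{lgcze3})--(\ref{lgcze4}), and classification of $L/\theta $ via Theorem \ref{gczetgh} or the induction hypothesis, so the approach is the right one and the skeleton is sound. Do note, though, that the ``systematic enumeration'' you defer is in fact the bulk of the actual proof, and two details of your outline need adjusting when you carry it out: the base cases $n=6$ and $n=7$ must be verified by hand as well (the bounds $5\cdot 2^{n-5}$ and $7\cdot 2^{n-6}$ degenerate when applied to small quotients), and the ${\cal L}_2\times {\cal L}_3$ family already arises in the $|L/\theta |=n-2$ case (with $\theta $ collapsing two covering pairs inside the ${\cal L}_2\times {\cal L}_3$ block), not only when $|L/\theta |=n-3$.
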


\begin{proof} Assume that $|{\rm Con}(L)|<2^{n-2}<2^{n-1}$, so that $n\geq 5$ by Remark \ref{ngeq}. We shall prove the statements in the enunciation by induction on $n\in \N $, $n\geq 5$. We shall identify the lattices up to isomorphism.

The five--element lattices are: $M_3$, $N_5$, ${\cal L}_2\dotplus {\cal L}_2^2$, ${\cal L}_2^2\dotplus {\cal L}_2$ and ${\cal L}_5$, whose numbers of congruences are: $2$, $5$, $8$, $8$ and $2^4=16$, respectively. The five--element lattices with strictly less than $2^{5-2}=8$ congruences are $M_3$ and $N_5$, out of which $N_5\cong {\cal L}_1\dotplus N_5\dotplus {\cal L}_{5-1-3}$, is of the form in (\ref{nextcgno1}) and has $5=5\cdot 2^{5-5}$ congruences, while $M_3$ has $2<4=2^{5-3}$ congruences. From this fact and Remark \ref{ngeq}, it follows that, if $|{\rm Con}(L)|=2^{n-3}$, then $n\geq 6$.

The six--element lattices are: ${\cal L}_3\boxplus {\cal L}_3\boxplus {\cal L}_3\boxplus {\cal L}_3$, ${\cal L}_3\boxplus {\cal L}_3\boxplus {\cal L}_4$, $M_3\dotplus {\cal L}_2$, ${\cal L}_2\dotplus M_3$, ${\cal L}_3\boxplus ({\cal L}_2^2\dotplus {\cal L}_2)$, ${\cal L}_3\boxplus ({\cal L}_2\dotplus {\cal L}_2^2)$, ${\cal L}_3\boxplus {\cal L}_5$, ${\cal L}_4\boxplus {\cal L}_4$, ${\cal L}_2\times {\cal L}_3$, $N_5\dotplus {\cal L}_2$, ${\cal L}_2\dotplus N_5$, ${\cal L}_2^2\dotplus {\cal L}_3$, ${\cal L}_3\dotplus {\cal L}_2^2$, ${\cal L}_2\dotplus {\cal L}_2^2\dotplus {\cal L}_2$ and ${\cal L}_6$, whose numbers of congruences are: $2$, $3$, $4$, $4$, $6$, $6$, $7$, $7$, $8$, $10$, $10$, $16$, $16$, $16=2^{6-2}$ and $32=2^{6-1}$, respectively. So, the third largest number of congruences of a six--element lattice is $10=5\cdot 2^{6-5}$, the fourth largest is $8=2^{6-3}$ and the fifth largest is $7=7\cdot 2^{6-6}$. As above, we notice that $N_5\dotplus {\cal L}_2$ and ${\cal L}_2\dotplus N_5$ are of the form in (\ref{nextcgno1}) and ${\cal L}_2\times {\cal L}_3$ is of the first form in (\ref{nextcgno2}).

It is easy to construct, as above, the $7$--element lattices, and see that the ones with strictly less than $2^{7-2}=32$ congruences are:\begin{itemize}
\item the ones having $20=5\cdot 2^{7-5}$ congruences, namely $N_5\dotplus {\cal L}_3$, ${\cal L}_3\dotplus N_5$ and ${\cal L}_2\dotplus N_5\dotplus {\cal L}_2$, all of the form in (\ref{nextcgno1});
\item the ones having $16=2^{7-3}$ congruences, namely $({\cal L}_2\times {\cal L}_3)\dotplus {\cal L}_2$ and ${\cal L}_2\dotplus ({\cal L}_2\times {\cal L}_3)$, which are of the first form in (\ref{nextcgno2}), as well as ${\cal L}_2^2\dotplus {\cal L}_2^2$, which is of the second form in (\ref{nextcgno2});
\item the ones having $14=7\cdot 2^{7-6}$ congruences, namely $({\cal L}_3\boxplus {\cal L}_5)\dotplus {\cal L}_2$, ${\cal L}_2\dotplus ({\cal L}_3\boxplus {\cal L}_5)$, $({\cal L}_4\boxplus {\cal L}_4)\dotplus {\cal L}_2$ and ${\cal L}_2\dotplus ({\cal L}_4\boxplus {\cal L}_4)$;
\item and the ones having strictly less than $14$ congruences.\end{itemize}

Now assume that $n\geq 8$ and lattices of cardinality at most $n-1$ fulfill the statements in the enunciation.

Note that, in the rest of this proof, whenever $|{\rm Con}(L)|=5\cdot 2^{n-5}$, $L$ is of the form in (\ref{nextcgno1}), and, whenever $|{\rm Con}(L)|=2^{n-3}$, $L$ is of one of the forms in (\ref{nextcgno2}).

Let $\theta \in At({\rm Con}(L))$. By Lemma \ref{lgcze}, (\ref{lgcze1}), at least one such $\theta $ exists, and $\theta ={\rm con}(a,b)$ for some $a,b\in L$ with $a\prec b$. By Lemma \ref{lgcze}, (\ref{lgcze3}), and Theorem \ref{gczetgh}, (\ref{gczetgh1}), $|L/\theta |\leq n-1$, thus $|{\rm Con}(L/\theta )|\leq 2^{n-2}$.

{\bf Case 1:} Assume that $|L/\theta |=n-1$, so that, according to Lemma \ref{lgcze}, (\ref{lgcze2}), $L/\theta =\{\{a,b\}\}\cup \{\{x\}\ |\ x\in L\setminus \{a,b\}\}$ and $[a,b]_L$ is a narrows, thus $b$ is the unique successor of $a$ and $a$ is the unique predecessor of $b$.

Then, clearly, $L/\theta \cong {\cal L}_{n-1}$ iff $L\cong {\cal L}_n$, hence, by Theorem \ref{gczetgh}, (\ref{gczetgh1}), $|{\rm Con}(L/\theta )|=2^{n-2}$ iff $|{\rm Con}(L)|=2^{n-1}$, which would contradict the hypothesis that $|{\rm Con}(L)|<2^{n-2}$ of the present theorem. Thus $|{\rm Con}(L/\theta )|<2^{n-2}$, so that $|{\rm Con}(L/\theta )|\leq 2^{n-3}$ according to Theorem \ref{gczetgh}, (\ref{gczetgh2}).

{\em Subcase 1.1:} Assume that $|{\rm Con}(L/\theta )|=2^{n-3}$, which, according to Theorem \ref{gczetgh}, (\ref{gczetgh2}), means that $L/\theta \cong {\cal L}_k\dotplus {\cal L}_2^2\dotplus {\cal L}_{n-k-3}\cong {\cal L}_k\dotplus ({\cal L}_3\boxplus {\cal L}_3)\dotplus {\cal L}_{n-k-3}$ for some $k\in \overline{1,n-4}$. If we denote the elements of $L/\theta $ as in the leftmost diagram below, with $x,y,z,u\in L$, and we also consider the facts that $|L|-|L/\theta |=1$, $a$ has the unique successor $b$ and $b$ has the unique predecessor $a$, $a/\theta =b/\theta =\{a,b\}$ and $v/\theta =\{v\}$ for all $v\in L\setminus \{a,b\}$, then we notice that $L$ is in one of the following situations, represented in the three diagrams to the right of that of $L/\theta $:\begin{itemize}
\item if $a/\theta =b/\theta \leq x/\theta $, then $b\leq x$ and $L\cong {\cal L}_2\dotplus L/\theta \cong {\cal L}_{k+1}\dotplus {\cal L}_2^2\dotplus {\cal L}_{n-k-3}$, while, if $a/\theta =b/\theta \geq u/\theta $, then $a\geq u$ and $L\cong L/\theta \dotplus {\cal L}_2\cong {\cal L}_k\dotplus {\cal L}_2^2\dotplus {\cal L}_{n-k-2}$, but in these situations $|{\rm Con}(L)|=2^{n-2}$, which contradicts the hypothesis that $|{\rm Con}(L)|<2^{n-2}$ of the theorem;
\item if $x/\theta <a/\theta =b/\theta <u/\theta $, then $x<a<b<u$, hence $\{a,b\}\cap \{y,z\}\neq \emptyset $, so that $L\cong {\cal L}_k\dotplus ({\cal L}_3\boxplus {\cal L}_4)\dotplus {\cal L}_{n-k-3}\cong {\cal L}_k\dotplus N_5\dotplus {\cal L}_{n-k-3}$, thus $|{\rm Con}(L)|=2^{k-1}\cdot 5\cdot 2^{n-k-4}=5\cdot 2^{n-5}$.\end{itemize}

\begin{center}\begin{tabular}{ccccc}
\begin{picture}(40,110)(0,0)
\put(-10,90){$L/\theta :$}
\put(20,0){\circle*{3}}
\put(20,10){\circle*{3}}
\put(19,13){$\vdots $}
\put(20,25){\circle*{3}}
\put(20,35){\circle*{3}}
\put(20,0){\line(0,1){10}}
\put(20,35){\line(-1,1){15}}
\put(20,35){\line(1,1){15}}
\put(20,35){\line(0,-1){10}}
\put(5,50){\circle*{3}}
\put(35,50){\circle*{3}}
\put(20,65){\circle*{3}}
\put(20,75){\circle*{3}}
\put(19,78){$\vdots $}
\put(20,90){\circle*{3}}
\put(20,100){\circle*{3}}
\put(23,31){$x/\theta $}
\put(-12,47){$y/\theta $}
\put(38,47){$z/\theta $}
\put(23,64){$u/\theta $}
\put(14,-9){$0/\theta $}
\put(14,105){$1/\theta $}
\put(20,65){\line(0,1){10}}
\put(20,65){\line(-1,-1){15}}
\put(20,65){\line(1,-1){15}}
\put(20,100){\line(0,-1){10}}
\end{picture}
&\hspace*{37pt}
\begin{picture}(40,110)(0,0)
\put(-35,90){$L$ when}
\put(-35,80){$a/\theta \leq x/\theta :$}
\put(20,0){\circle*{3}}
\put(20,13){\circle*{3}}
\put(20,13){\line(0,1){10}}
\put(20,35){\line(-1,1){15}}
\put(20,35){\line(1,1){15}}
\put(19,1.7){$\vdots $}
\put(19,24.7){$\vdots $}
\put(20,23){\circle*{3}}
\put(20,35){\circle*{3}}
\put(5,50){\circle*{3}}
\put(35,50){\circle*{3}}
\put(20,65){\circle*{3}}
\put(20,75){\circle*{3}}
\put(19,78){$\vdots $}
\put(20,90){\circle*{3}}
\put(20,100){\circle*{3}}
\put(23,11){$a$}
\put(23,21){$b$}
\put(-2,48){$y$}
\put(38,47){$z$}
\put(23,64){$u$}
\put(18,-9){$0$}
\put(18,103){$1$}
\put(20,65){\line(0,1){10}}
\put(20,65){\line(-1,-1){15}}
\put(20,65){\line(1,-1){15}}
\put(20,100){\line(0,-1){10}}
\end{picture}
&\hspace*{37pt}
\begin{picture}(40,110)(0,0)
\put(-35,90){$L$ when}
\put(-35,80){$a/\theta \geq u/\theta :$}
\put(20,0){\circle*{3}}
\put(20,10){\circle*{3}}
\put(19,13){$\vdots $}
\put(20,25){\circle*{3}}
\put(20,35){\circle*{3}}
\put(20,0){\line(0,1){10}}
\put(20,35){\line(1,1){15}}
\put(20,35){\line(0,-1){10}}
\put(5,50){\circle*{3}}
\put(35,50){\circle*{3}}
\put(20,65){\circle*{3}}
\put(20,78){\circle*{3}}
\put(19,66.7){$\vdots $}
\put(19,87.7){$\vdots $}
\put(20,88){\circle*{3}}
\put(20,100){\circle*{3}}
\put(23,31){$x$}
\put(-2,48){$y$}
\put(38,47){$z$}
\put(18,-9){$0$}
\put(18,103){$1$}
\put(22,75){$a$}
\put(22,85){$b$}
\put(20,35){\line(-1,1){15}}
\put(20,35){\line(1,1){15}}
\put(20,88){\line(0,-1){10}}
\put(20,65){\line(-1,-1){15}}
\put(20,65){\line(1,-1){15}}
\end{picture}
&\hspace*{62pt}
\begin{picture}(40,110)(0,0)
\put(-64,90){$L$ when}
\put(-64,80){$x/\theta <a/\theta <u/\theta :$}
\put(20,0){\circle*{3}}
\put(20,10){\circle*{3}}
\put(19,13){$\vdots $}
\put(20,25){\circle*{3}}
\put(20,35){\circle*{3}}
\put(20,0){\line(0,1){10}}
\put(20,35){\line(-1,1){15}}
\put(20,35){\line(1,1){10}}
\put(20,35){\line(0,-1){10}}
\put(5,50){\circle*{3}}
\put(20,65){\circle*{3}}
\put(20,75){\circle*{3}}
\put(19,78){$\vdots $}
\put(20,90){\circle*{3}}
\put(20,100){\circle*{3}}
\put(23,31){$x$}
\put(23,64){$u$}
\put(18,-9){$0$}
\put(18,103){$1$}
\put(20,65){\line(0,1){10}}
\put(20,65){\line(-1,-1){15}}
\put(20,65){\line(1,-1){10}}
\put(30,55){\line(0,-1){10}}
\put(30,55){\circle*{3}}
\put(30,45){\circle*{3}}
\put(32,43){$a$}
\put(32,52){$b$}
\put(20,100){\line(0,-1){10}}
\end{picture}
&\hspace*{51pt}
\begin{picture}(40,110)(0,0)
\put(-47,90){$L/\theta $ in}
\put(-47,80){Subcase 1.2.1:}
\put(20,0){\circle*{3}}
\put(20,10){\circle*{3}}
\put(19,13){$\vdots $}
\put(20,25){\circle*{3}}
\put(20,35){\circle*{3}}
\put(20,0){\line(0,1){10}}
\put(20,35){\line(-1,1){15}}
\put(20,35){\line(1,1){10}}
\put(20,35){\line(0,-1){10}}
\put(5,50){\circle*{3}}
\put(20,65){\circle*{3}}
\put(20,75){\circle*{3}}
\put(19,78){$\vdots $}
\put(20,90){\circle*{3}}
\put(20,100){\circle*{3}}
\put(23,31){$x/\theta $}
\put(-12,47){$y/\theta $}
\put(33,42){$z/\theta $}
\put(33,52){$t/\theta $}
\put(23,64){$u/\theta $}
\put(14,-9){$0/\theta $}
\put(14,105){$1/\theta $}
\put(20,65){\line(0,1){10}}
\put(20,65){\line(-1,-1){15}}
\put(20,65){\line(1,-1){10}}
\put(30,55){\line(0,-1){10}}
\put(30,55){\circle*{3}}
\put(30,45){\circle*{3}}
\put(20,100){\line(0,-1){10}}
\end{picture}\end{tabular}\end{center}\vspace*{3pt}

{\em Subcase 1.2:} Assume that $|{\rm Con}(L/\theta )|<2^{n-3}$, so that, by the induction hypothesis, $|{\rm Con}(L/\theta )|\leq 5\cdot 2^{n-6}$, thus $|{\rm Con}(L)|\leq 2\cdot 5\cdot 2^{n-6}=5\cdot 2^{n-5}$ by Lemma \ref{lgcze}, (\ref{lgcze2}).

{\em Subcase 1.2.1:} Assume that $|{\rm Con}(L/\theta )|=5\cdot 2^{n-6}$, which, by the induction hypothesis, means that $L/\theta \cong {\cal L}_k\dotplus N_5\dotplus {\cal L}_{n-k-4}$ for some $k\in \overline{1,n-5}$, so that $L$ is in one of the following situations, that we separate as in Subcase 1.1, where the elements of $L/\theta $ are denoted as in the rightmost diagram above, with $x,y,z,t,u\in L$:\begin{itemize}
\item if $a/\theta =b/\theta \leq x/\theta $, then $a<b\leq x$ and $L\cong {\cal L}_2\dotplus L/\theta \cong {\cal L}_{k+1}\dotplus N_5\dotplus {\cal L}_{n-k-4}$, while, if $a/\theta =b/\theta \geq u/\theta $, then $u\leq a<b$ and $L\cong L/\theta \dotplus {\cal L}_2\cong {\cal L}_k\dotplus N_5\dotplus {\cal L}_{n-k-3}$, hence $|{\rm Con}(L)|=2\cdot |{\rm Con}(L/\theta )|=5\cdot 2^{n-5}$;
\item if $x/\theta <a/\theta =b/\theta <u/\theta $, then $x<a<b<u$ and: either $\{a,b\}\cap \{z,t\}\neq \emptyset $, case in which $a,b,z,t$ are pairwise comparable, because otherwise $a$ would be meet--reducible or $b$ would be join--reducible, thus $L\cong {\cal L}_k\dotplus ({\cal L}_3\boxplus {\cal L}_5)\dotplus {\cal L}_{n-k-4}$, or $y\in \{a,b\}$, so that $L\cong {\cal L}_k\dotplus ({\cal L}_4\boxplus {\cal L}_4)\dotplus {\cal L}_{n-k-4}$, hence $|{\rm Con}(L)|=2^{k-1}\cdot (2^2+3)\cdot 2^{n-k-5}=7\cdot 2^{n-6}$.\end{itemize}

The following subcases can be treated exactly as above. For brevity, we shall only indicate the shapes of the lattices in the remaining part of the proof.

{\em Subcase 1.2.2:} Assume that $|{\rm Con}(L/\theta )|<5\cdot 2^{n-6}$, so that $|{\rm Con}(L/\theta )|\leq 2^{n-4}$ by the induction hypothesis, hence $|{\rm Con}(L)|\leq 2\cdot 2^{n-4}=2^{n-3}$ by Lemma \ref{lgcze}, (\ref{lgcze2}).

{\em Subcase 1.2.2.1:} Assume that $|{\rm Con}(L/\theta )|=2^{n-4}$, which, by the induction hypothesis, means that either $L/\theta \cong {\cal L}_r\dotplus ({\cal L}_2\times {\cal L}_3)\dotplus {\cal L}_{n-r-5}$ for some $r\in \overline{1,n-6}$, or $L/\theta \cong {\cal L}_k\dotplus {\cal L}_2^2\dotplus {\cal L}_m\dotplus {\cal L}_2^2\dotplus {\cal L}_{n-k-m-5}$ for some $k,m\in \N ^*$ such that $k+m\leq n-6$, so that $L$ is in one of the following situations:\begin{itemize}
\item $L\cong {\cal L}_{r+1}\dotplus ({\cal L}_2\times {\cal L}_3)\dotplus {\cal L}_{n-r-5}$ or $L\cong {\cal L}_r\dotplus ({\cal L}_2\times {\cal L}_3)\dotplus {\cal L}_{n-r-4}$ or $L\cong {\cal L}_{k+1}\dotplus {\cal L}_2^2\dotplus {\cal L}_m\dotplus {\cal L}_2^2\dotplus {\cal L}_{n-k-m-5}$ or $L/\theta \cong {\cal L}_k\dotplus {\cal L}_2^2\dotplus {\cal L}_{m+1}\dotplus {\cal L}_2^2\dotplus {\cal L}_{n-k-m-5}$ or $L/\theta \cong {\cal L}_k\dotplus {\cal L}_2^2\dotplus {\cal L}_m\dotplus {\cal L}_2^2\dotplus {\cal L}_{n-k-m-4}$, so that $|{\rm Con}(L)|=2\cdot |{\rm Con}(L/\theta )|=2^{n-3}$;
\item $L\cong {\cal L}_k\dotplus N_5\dotplus {\cal L}_m\dotplus {\cal L}_2^2\dotplus {\cal L}_{n-k-m-5}$ or $L\cong {\cal L}_k\dotplus {\cal L}_2^2\dotplus {\cal L}_m\dotplus N_5\dotplus {\cal L}_{n-k-m-5}$, case in which $|{\rm Con}(L)|=5\cdot 2^2\cdot 2^{k-1+m-1+n-k-m-6}=5\cdot 2^{n-6}<7\cdot 2^{n-6}$;
\item $L\cong {\cal L}_r\dotplus G\dotplus {\cal L}_{n-r-5}$ or $L\cong {\cal L}_r\dotplus G^{\prime }\dotplus {\cal L}_{n-r-5}$ or $L\cong {\cal L}_r\dotplus H\dotplus {\cal L}_{n-r-5}$ or $L\cong {\cal L}_r\dotplus H^{\prime }\dotplus {\cal L}_{n-r-5}$ or $L\cong {\cal L}_r\dotplus K\dotplus {\cal L}_{n-r-5}$ or $L\cong {\cal L}_r\dotplus K^{\prime }\dotplus {\cal L}_{n-r-5}$, where $G$, $H$ and $K$ are the following gluings of a pentagon with a rhombus and $G^{\prime }$, $H^{\prime }$ and $K^{\prime }$ are the duals of $G$, $H$ and $K$, respectively, so that $|{\rm Con}(L)|=9\cdot 2^{r-1+n-r-6}=9\cdot 2^{n-7}<14\cdot 2^{n-7}=7\cdot 2^{n-6}$ since $|{\rm Con}(G)|=|{\rm Con}(H)|=|{\rm Con}(K)|=9$, which is simple to verify, and thus $|{\rm Con}(G^{\prime })|=|{\rm Con}(H^{\prime })|=|{\rm Con}(K^{\prime })|=9$, as well; below we are indicating the positions of $a$ and $b$ in these copies of $G$, $H$, $K$, $G^{\prime }$, $H^{\prime }$ and $K^{\prime }$ from $L$:\end{itemize}

\begin{center}\begin{tabular}{cccccc}
\hspace*{-15pt}
\begin{picture}(40,60)(0,0)
\put(0,50){$G:$}
\put(20,0){\circle*{3}}
\put(0,20){\circle*{3}}
\put(40,20){\circle*{3}}
\put(20,40){\circle*{3}}
\put(60,40){\circle*{3}}
\put(40,60){\circle*{3}}
\put(10,10){\circle*{3}}
\put(4,4){$a$}
\put(-7,17){$b$}
\put(20,0){\line(1,1){40}}
\put(20,0){\line(-1,1){20}}
\put(40,60){\line(-1,-1){40}}
\put(40,60){\line(1,-1){20}}
\put(20,40){\line(1,-1){20}}
\end{picture}
&\hspace*{20pt}
\begin{picture}(40,60)(0,0)
\put(0,50){$H:$}
\put(20,0){\circle*{3}}
\put(0,20){\circle*{3}}
\put(40,20){\circle*{3}}
\put(20,40){\circle*{3}}
\put(60,40){\circle*{3}}
\put(40,60){\circle*{3}}
\put(30,10){\circle*{3}}
\put(31,5){$a$}
\put(42,14){$b$}
\put(20,0){\line(1,1){40}}
\put(20,0){\line(-1,1){20}}
\put(40,60){\line(-1,-1){40}}
\put(40,60){\line(1,-1){20}}
\put(20,40){\line(1,-1){20}}
\end{picture}
&\hspace*{20pt}
\begin{picture}(40,60)(0,0)
\put(0,50){$K:$}
\put(20,0){\circle*{3}}
\put(0,20){\circle*{3}}
\put(40,20){\circle*{3}}
\put(20,40){\circle*{3}}
\put(60,40){\circle*{3}}
\put(40,60){\circle*{3}}
\put(10,30){\circle*{3}}
\put(4,30){$b$}
\put(-7,17){$a$}
\put(20,0){\line(1,1){40}}
\put(20,0){\line(-1,1){20}}
\put(40,60){\line(-1,-1){40}}
\put(40,60){\line(1,-1){20}}
\put(20,40){\line(1,-1){20}}
\end{picture}
&\hspace*{20pt}
\begin{picture}(40,60)(0,0)
\put(0,50){$G^{\prime }:$}
\put(20,0){\circle*{3}}
\put(0,20){\circle*{3}}
\put(40,20){\circle*{3}}
\put(20,40){\circle*{3}}
\put(60,40){\circle*{3}}
\put(40,60){\circle*{3}}
\put(50,50){\circle*{3}}
\put(52,51){$b$}
\put(63,38){$a$}
\put(20,0){\line(1,1){40}}
\put(20,0){\line(-1,1){20}}
\put(40,60){\line(-1,-1){40}}
\put(40,60){\line(1,-1){20}}
\put(20,40){\line(1,-1){20}}
\end{picture}
&\hspace*{20pt}
\begin{picture}(40,60)(0,0)
\put(0,50){$H^{\prime }:$}
\put(20,0){\circle*{3}}
\put(0,20){\circle*{3}}
\put(40,20){\circle*{3}}
\put(20,40){\circle*{3}}
\put(60,40){\circle*{3}}
\put(40,60){\circle*{3}}
\put(30,50){\circle*{3}}
\put(24,50){$b$}
\put(13,38){$a$}
\put(20,0){\line(1,1){40}}
\put(20,0){\line(-1,1){20}}
\put(40,60){\line(-1,-1){40}}
\put(40,60){\line(1,-1){20}}
\put(20,40){\line(1,-1){20}}
\end{picture}
&\hspace*{20pt}
\begin{picture}(40,60)(0,0)
\put(0,50){$K^{\prime }:$}
\put(20,0){\circle*{3}}
\put(0,20){\circle*{3}}
\put(40,20){\circle*{3}}
\put(20,40){\circle*{3}}
\put(60,40){\circle*{3}}
\put(40,60){\circle*{3}}
\put(50,30){\circle*{3}}
\put(52,25){$a$}
\put(62,37){$b$}
\put(20,0){\line(1,1){40}}
\put(20,0){\line(-1,1){20}}
\put(40,60){\line(-1,-1){40}}
\put(40,60){\line(1,-1){20}}
\put(20,40){\line(1,-1){20}}
\end{picture}\end{tabular}\end{center}

{\em Subcase 1.2.2.2:} Assume that $|{\rm Con}(L/\theta )|<2^{n-4}$, so that $|{\rm Con}(L/\theta )|\leq 7\cdot 2^{n-7}$ by the induction hypothesis, thus $|{\rm Con}(L)|\leq 2\cdot 7\cdot 2^{n-7}=7\cdot 2^{n-6}$ by Lemma \ref{lgcze}, (\ref{lgcze2}).

{\bf Case 2:} Assume that $|L/\theta |\leq n-2$, which means that $[a,b]_L$ is not a narrows according to Lemma \ref{lgcze}, (\ref{lgcze3}), hence $a$ is meet--reducible or $b$ is join--reducible. We shall assume that $a$ is meet--reducible, so that $a\prec c$ for some $c\in L\setminus \{b\}$, since $L$ is finite; the case when $b$ is join--reducible shall follow by duality. Since $a\prec b$, $a\prec c$ and $b\neq c$, it follows that $b$ and $c$ are incomparable, hence $b<b\vee c$ and $c<b\vee c$. Since $a/\theta =b/\theta $, we have $(b\vee c)/\theta =(a\vee c)/\theta =c/\theta $.

Let us see that, in this case, $|{\rm Con}(L)|\leq 2^{n-3}$. Indeed, assume by absurdum that $|{\rm Con}(L)|>2^{n-3}$, so that $|{\rm Con}(L/\theta )|>2^{n-4}$ by Lemma \ref{lgcze}, (\ref{lgcze2}). Then Theorem \ref{gczetgh}, (\ref{gczetgh1}) and (\ref{gczetgh2}), ensures us that $|{\rm Con}(L/\theta )|=2^{n-3}$, $|L/\theta |=n-2$ and $L/\theta \cong {\cal L}_{n-2}$; so we are in the situation from Lemma \ref{lgcze}, (\ref{lgcze4}), hence, also using Remark \ref{onlgcze}, we get that $\{a,b\}=a/\theta \prec c/\theta =\{c,b\vee c\}$ and $x/\theta =\{x\}$ for all $x\in L\setminus \{a,b,c,b\vee c\}$, and, since $L/\theta $ is a chain, for all $x,y\in L$, we have either $x/\theta \leq a/\theta \prec c/\theta $ or $a/\theta \prec c/\theta =(b\vee c)/\theta \leq x/\theta $, and either $x/\theta \leq y/\theta $ or $y/\theta \leq x/\theta $, therefore $L\cong {\cal L}_k\dotplus {\cal L}_2^2\dotplus {\cal L}_{n-k-2}$ for some $k\in \overline{1,n-3}$, with $\{a,b,c,b\vee c\}$ being the sublattice of $L$ isomorphic to ${\cal L}_2^2$; but then $|{\rm Con}(L)|=2^{n-2}$, which contradicts the hypothesis that $|{\rm Con}(L)|<2^{n-2}$ of the theorem. Therefore, indeed, $|{\rm Con}(L)|\leq 2^{n-3}$.

{\em Subcase 2.1:} Assume that $|{\rm Con}(L)|=2^{n-3}$, so that $|{\rm Con}(L/\theta )|\geq 2^{n-4}$ by Lemma \ref{lgcze}, (\ref{lgcze2}). This, along with the fact that $|L/\theta |\leq n-2$, and Theorem \ref{gczetgh}, (\ref{gczetgh1}) and (\ref{gczetgh2}), shows that there are only three possibilities:\begin{itemize}
\item {\bf (a)} $|L/\theta |=n-2$ and $|{\rm Con}(L/\theta )|=2^{n-3}$, case in which $L/\theta \cong {\cal L}_{n-2}$, but then, as above, it follows that $L\cong {\cal L}_r\dotplus {\cal L}_2^2\dotplus {\cal L}_{n-r-2}$ for some $r\in \overline{1,n-3}$, so that $|{\rm Con}(L)|=2^{n-2}$, and this contradicts the hypothesis that $|{\rm Con}(L)|<2^{n-2}$ of the theorem;
\item {\bf (b)} $|L/\theta |=n-2$ and $|{\rm Con}(L/\theta )|=2^{n-4}$, case in which $L/\theta \cong {\cal L}_r\dotplus {\cal L}_2^2\dotplus {\cal L}_{n-r-4}$ for some $r\in \overline{1,n-5}$, according to Theorem \ref{gczetgh}, (\ref{gczetgh2});
\item {\bf (c)} $|L/\theta |=n-3$ and $|{\rm Con}(L/\theta )|=2^{n-4}$, case in which $L/\theta \cong {\cal L}_{n-3}$, according to Theorem \ref{gczetgh}, (\ref{gczetgh1}).
\end{itemize}

In the case {\bf (b)}, we have $|L|-|L/\theta |=2$, so that we are in the situation from Lemma \ref{lgcze}, (\ref{lgcze4}), therefore, by Remark \ref{onlgcze}, $\{a,b\}=a/\theta \prec c/\theta =\{c,b\vee c\}$ and, for all $x\in L\setminus (a/\theta \cup c/\theta )=L\setminus \{a,b,c,b\vee c\}$, $x/\theta =\{x\}$ and $x\notin [a,b\vee c]_L$, hence $L$ has one of the following forms:\begin{itemize}
\item $L\cong {\cal L}_s\dotplus {\cal L}_2^2\dotplus {\cal L}_t\dotplus {\cal L}_2^2\dotplus {\cal L}_{n-s-t-4}$ for some $s,t\in \N ^*$ such that $s+t\leq n-5$; in this case, one of the two copies of ${\cal L}_2^2$ from $L$ is $\{a,b,c,b\vee c\}$, $r\in \{s,s+t+2\}$, and, indeed, $|{\rm Con}(L)|=2^{n-3}$;
\item $L\cong {\cal L}_r\dotplus ({\cal L}_2\times {\cal L}_3)\dotplus {\cal L}_{n-r-4}$, case in which, indeed, $|{\rm Con}(L)|=2^{n-3}$, and $a$ and $b\vee c$ belong to the copy of ${\cal L}_2\times {\cal L}_3$ from $L$, in which they are situated as in one of the following two leftmost diagrams;
\item $L\cong {\cal L}_r\dotplus ({\cal L}_3\boxplus ({\cal L}_2^2\dotplus {\cal L}_2))\dotplus {\cal L}_{n-r-4}$ or $L\cong {\cal L}_r\dotplus ({\cal L}_3\boxplus ({\cal L}_2\dotplus {\cal L}_2^2))\dotplus {\cal L}_{n-r-4}$, in which $a$, $b$, $c$ and $b\vee c$ would be positioned in the copy of ${\cal L}_3\boxplus ({\cal L}_2^2\dotplus {\cal L}_2)$, respectively ${\cal L}_3\boxplus ({\cal L}_2\dotplus {\cal L}_2^2)$, as in the following two rightmost diagrams, but then, by Remark \ref{0red}, $|{\rm Con}(L)|=(2+2^2)\cdot 2^{r-1+n-r-5}=6\cdot 2^{n-6}=3\cdot 2^{n-5}<4\cdot 2^{n-5}=2^{n-3}$, which contradicts the hypothesis that $|{\rm Con}(L)|=2^{n-3}$ of Subcase 2.1.\end{itemize}\vspace*{1pt}

\begin{center}\begin{tabular}{cccc}
\begin{picture}(40,60)(0,0)
\put(20,0){\circle*{3}}
\put(0,20){\circle*{3}}
\put(40,20){\circle*{3}}
\put(20,40){\circle*{3}}
\put(60,40){\circle*{3}}
\put(40,60){\circle*{3}}
\put(18,-8){$a$}
\put(2,43){$b\vee c$}
\put(20,0){\line(1,1){40}}
\put(20,0){\line(-1,1){20}}
\put(40,60){\line(-1,-1){40}}
\put(40,60){\line(1,-1){20}}
\put(20,40){\line(1,-1){20}}
\end{picture}
&\hspace*{20pt}
\begin{picture}(40,60)(0,0)
\put(20,0){\circle*{3}}
\put(0,20){\circle*{3}}
\put(40,20){\circle*{3}}
\put(20,40){\circle*{3}}
\put(60,40){\circle*{3}}
\put(40,60){\circle*{3}}
\put(38,12){$a$}
\put(30,63){$b\vee c$}
\put(20,0){\line(1,1){40}}
\put(20,0){\line(-1,1){20}}
\put(40,60){\line(-1,-1){40}}
\put(40,60){\line(1,-1){20}}
\put(20,40){\line(1,-1){20}}
\end{picture}
&\hspace*{40pt}
\begin{picture}(80,60)(0,0)
\put(40,0){\circle*{3}}
\put(38,-8){$a$}
\put(33,17){$b$}
\put(63,17){$c$}
\put(40,0){\line(1,1){20}}
\put(40,0){\line(-1,1){30}}
\put(40,0){\line(0,1){20}}
\put(40,20){\circle*{3}}
\put(60,20){\circle*{3}}
\put(60,40){\circle*{3}}
\put(40,60){\circle*{3}}
\put(10,30){\circle*{3}}
\put(60,20){\line(0,1){20}}
\put(40,20){\line(1,1){20}}
\put(40,60){\line(-1,-1){30}}
\put(40,60){\line(1,-1){20}}
\put(61,42){$b\vee c$}
\end{picture}
&\hspace*{-10pt}
\begin{picture}(80,60)(0,0)
\put(40,0){\circle*{3}}
\put(40,0){\line(1,1){30}}
\put(40,0){\line(-1,1){20}}
\put(20,20){\circle*{3}}
\put(20,40){\circle*{3}}
\put(40,40){\circle*{3}}
\put(40,60){\circle*{3}}
\put(20,20){\line(0,1){20}}
\put(20,20){\line(1,1){20}}
\put(40,60){\line(0,-1){20}}
\put(40,60){\line(-1,-1){20}}
\put(40,60){\line(1,-1){30}}
\put(70,30){\circle*{3}}
\put(14,13){$a$}
\put(13,37){$b$}
\put(43,37){$c$}
\put(30,63){$b\vee c$}
\end{picture}\end{tabular}\end{center}

In the case {\bf (c)}, we have $|L|-|L/\theta |=3$ and $L/\theta $ is a chain.

Assume by absurdum that $a/\theta =b/\theta =c/\theta $, so that $(b\vee c)/\theta =a/\theta $, therefore, since $a/\theta $ is a convex sublattice of $L$ and $|L|-|L/\theta |=3$, we have $a/\theta =\{a,b,c,b\vee c\}=[a,b\vee c]_L$, so that $b\prec b\vee c$ and $c\prec b\vee c$, and $L/\theta =\{\{a,b,c,b\vee c\}\}\cup \{\{x\}\ |\ x\in L\setminus \{a,b,c,b\vee c\}\}$. Since $L/\theta $ is a chain, it follows that, for any $x,y\in L\setminus \{a,b,c,b\vee c\}$, $\{x\}=x/\theta <a/\theta =\{a,b,c,b\vee c\}$ or $(b\vee c)/\theta =a/\theta <x/\theta $, and $x/\theta \leq y/\theta =\{y\}$ or $y/\theta \leq x/\theta $, so that $x\leq y$ or $y\leq x$, and $x<z$ for every $z\in \{a,b,c,b\vee c\}$ or $z<x$ for every $z\in \{a,b,c,b\vee c\}$. Therefore $L\cong {\cal L}_k\dotplus {\cal L}_2^2\dotplus {\cal L}_{n-k-2}$ for some $k\in \overline{1,n-3}$, so that $|{\rm Con}(L)|=2^{n-2}$, which contradicts the hypothesis that $|{\rm Con}(L)|<2^{n-2}$ of the theorem.

Therefore $a/\theta \neq c/\theta $, hence, by Remark \ref{onlgcze} and the fact that $L/\theta $ is a chain, $a/\theta \prec c/\theta =(b\vee c)/\theta $ since $a\prec c$, and, for all $x\in L\setminus (a/\theta \cup c/\theta )$ and all $y,z\in L$, we have $x\notin [a,b\vee c]_L$, either $x/\theta <a/\theta =b/\theta $ or $c/\theta =(b\vee c)/\theta <x/\theta $, and either $y/\theta \leq z/\theta $ or $z/\theta \leq y/\theta $.

Since $|L|-|L/\theta |=3>2$, we get that there exists $d\in L\setminus \{a,b,c,b\vee c\}$ such that $\{d\}\subsetneq d/\theta $. If $d\in a/\theta $, then $a/\theta $ is a three--element lattice, thus $a/\theta =\{a,b,d\}\cong {\cal L}_3$, so that $d<a<b$ or $a<b<d$ since $a\prec b$. Similarly, if $d\in c/\theta $, then $c/\theta =\{c,b\vee c,d\}\cong {\cal L}_3$, so that $d<c<b\vee c$ or $c<d<b\vee c$ or $c<b\vee c<d$. In each of these situations, the fact that $|L|-|L/\theta |=3$ shows that, for all $x,y\in L\setminus \{a,b,c,b\vee c,d\}=a/\theta \cup c/\theta $, $x/\theta =\{x\}$, hence, by the above, either $x<u$ for all $u\in \{a,b,c,b\vee c,d\}$ or $u<x$ for all $u\in \{a,b,c,b\vee c,d\}$, and either $x\leq y$ or $y\leq x$.

Assume by absurdum that $d\in a/\theta $, so that $a/\theta =\{a,b,d\}$ and $c/\theta =\{c,b\vee c\}$. Note that, in this case, $[c,b\vee c]_L=\{c,b\vee c\}$ by the convexity of $c/\theta $, thus $c\prec b\vee c$.

If $d<a<b$, then $\{d,a,b,c,b\vee c\}\cong {\cal L}_2\dotplus {\cal L}_2^2$, as in the leftmost figure below, hence $L\cong {\cal L}_k\dotplus {\cal L}_2^2\dotplus {\cal L}_{n-k-2}$ for some $k\in \overline{2,n-3}$ by the above, therefore $|{\rm Con}(L)|=2^{n-2}$, which, again, contradicts the hypothesis that $|{\rm Con}(L)|<2^{n-2}$ of the theorem.

If $a<b<d$, then $c\ngeq b<d\leq d\vee c\in (a\vee c)/\theta =c/\theta =\{c,b\vee c\}$, thus $b<d\leq d\vee c=b\vee c\neq d$, so that $b<d<b\vee c$. Therefore $\{a,b,c,d,b\vee c\}\cong N_5$, as in the second figure below, hence $L\cong {\cal L}_k\dotplus N_5\dotplus {\cal L}_{n-k-3}$ for some $k\in \overline{1,n-4}$ by the above, thus $|{\rm Con}(L)|=5\cdot 2^{n-5}$, which is a contradiction to the hypothesis that $|{\rm Con}(L)|=2^{n-3}$ of Subcase 2.1.

Now assume by absurdum that $d\in \{c,b\vee c\}$, so that $c/\theta =\{c,b\vee c,d\}$ and $a/\theta =\{a,b\}$.

If $c<b\vee c<d$, then $\{a,b,c,b\vee c,d\}\cong {\cal L}_2^2\dotplus {\cal L}_2$, as in the third figure below, hence $L\cong {\cal L}_k\dotplus {\cal L}_2^2\dotplus {\cal L}_{n-k-2}$ for some $k\in \overline{1,n-4}$ by the above, therefore $|{\rm Con}(L)|=2^{n-2}$, which, once again, contradicts the hypothesis that $|{\rm Con}(L)|<2^{n-2}$ of the theorem.

If $c<d<b\vee c$, then $\{a,b,c,d,b\vee c\}\cong N_5$, as in the fourth figure below, hence $L\cong {\cal L}_k\dotplus N_5\dotplus {\cal L}_{n-k-3}$ for some $k\in \overline{1,n-4}$ by the above, therefore $|{\rm Con}(L)|=5\cdot 2^{n-5}$, which is, again, a contradiction to the hypothesis that $|{\rm Con}(L)|=2^{n-3}$ of the current subcase.

Finally, if $d<c<b\vee c$, then $\{a,b\}=a/\theta =(a\wedge c)/\theta =(a\wedge d)/\theta $, hence $b>a\geq a\wedge d\in \{a,b\}$, thus $a\wedge d=a\neq d$, so we obtain $a<d<c$, which contradicts the fact that $a\prec c$.

Hence we are left with this possibility: $d/\theta =e/\theta $ for some $e\in L\setminus \{a,b,c,b\vee c,d\}$. Then, since $|L|-|L/\theta |=3$, it follows that $x/\theta =\{x\}$ for all $x\in L\setminus \{a,b,c,b\vee c,d,e\}$, $a/\theta =b/\theta =\{a,b\}$, $c/\theta =(b\vee c)/\theta =\{c,b\vee c\}$ and $d/\theta =e/\theta =\{d,e\}$, so that, since $c/\theta $ and $d/\theta $ are convex sublattices of $L$, thus $c/\theta \cong d/\theta \cong {\cal L}_2$, we have $c\prec b\vee c$ and either $d\prec e$ or $e\prec d$. Without loss of generality, we may assume that $d\prec e$.

$L/\theta $ is a chain and $a/\theta \prec c/\theta $, thus either $d/\theta <a/\theta \prec c/\theta $ or $a/\theta \prec c/\theta <d/\theta $. Since $a/\theta \prec c/\theta $, for any $x\in L$, $a\leq x\leq b\vee c$ implies $a/\theta \leq x/\theta \leq c/\theta $, which in turn implies $x/\theta =a/\theta $ or $x/\theta =c/\theta $, that is $x\in a/\theta \cup c/\theta =\{a,b,c,b\vee c\}$; therefore $[a,b\vee c]_L=a/\theta \cup c/\theta =\{a,b,c,b\vee c\}$, so that $b\prec b\vee c$. Therefore $b\wedge c=a\prec b\prec b\vee c$ and $a\prec c\prec b\vee c$, hence, without loss of generality, we may assume that $d/\theta <a/\theta \prec c/\theta $, because the other case is dual to this one. So, in $L/\theta $, we will have $\{d,e\}<\{a,b\}\prec \{c,b\vee c\}$. Since $L/\theta $ is a chain, we also have, for all $x\in L\setminus \{a,b,c,b\vee c,d,e\}$: either $x/\theta <a/\theta \prec c/\theta $ or $a/\theta \prec c/\theta =(b\vee c)/\theta <x/\theta $, and either $x/\theta \leq d/\theta $ or $d/\theta =e/\theta <x/\theta $, therefore, since $x/\theta =\{x\}$, we have either $x<a$ or $b\vee c<x$, and either $x<d$ or $e<x$.

If we had $e<a$, then $L\cong {\cal L}_k\dotplus {\cal L}_2^2\dotplus {\cal L}_{n-k-2}$ for some $k\in \overline{3,n-3}$, because $d,e,a,b,c,b\vee c$ would be positioned in $L$ as in the fifth figure below, thus $|{\rm Con}(L)|=2^{n-2}$, which contradicts the hypothesis that $|{\rm Con}(L)|<2^{n-2}$ of the theorem, as well as the fact that $\theta ={\rm con}(a,b)$. We have $\{d,e\}=d/\theta <a/\theta =\{a,b\}$. Since $a/\theta $ and $d/\theta =e/\theta $ are convex, we can not have $e>a$. Hence $e$ and $a$ are incomparable, $d<a$ and $e<b$. So $d\leq a\wedge e\leq e$, thus $a\wedge e\in d/\theta =\{d,e\}$ by the convexity of $d/\theta $, hence $a\wedge e=d$ since $e\not< a$ by the above. Analogously, $a\vee e=b$. Hence $\{d,e,a,b,c,b\vee c\}\cong {\cal L}_2\times {\cal L}_3$, as in the rightmost figure below.

Recall that $d\prec e$, $a\prec b\prec b\vee c$, $a\prec c\prec b\vee c$ and $[a,b\vee c]_L=\{a,b,c,b\vee c\}$. Assume by absurdum that $[d,b]_L\neq \{d,e,a,b\}$, so that $x\in [d,b]_L$ for some $x\in L\setminus \{d,e,a,b,c,b\vee c\}=L\setminus (d/\theta \cup a/\theta \cup c/\theta )$. If $x$ is comparable to neither $e$, nor $a$, then $\{d,e,x,a,b\}\cong M_3$, so that $(a,d)\in {\rm con}(a,b)=\theta $, which contradicts the fact that $a/\theta \neq d/\theta $. If $x$ is comparable to $a$, then $d<x<a$, while, if $x$ is comparable to $e$, then $e<x<b$, since $d<x<b$, $d\prec e$ and $a\prec b$; in each of these cases, $\{d,e,x,a,b\}\cong N_5$, so $x\in a/{\rm con}(a,b)=a/\theta $ in the first of these two cases, and $x\in d/{\rm con}(a,b)=d/\theta $ in the second, and each of these situations contradicts the fact that $x\notin d/\theta \cup a/\theta \cup c/\theta $. Therefore $[d,b]_L=\{d,e,a,b\}$. Hence $L\cong {\cal L}_k\dotplus ({\cal L}_2\times {\cal L}_3)\dotplus {\cal L}_{n-k-4}$ for some $k\in \overline{1,n-5}$, which, indeed, has $|{\rm Con}(L)|=2^{n-3}$.\vspace*{7pt}

\begin{center}\begin{tabular}{cccccc}
\hspace*{-10pt}
\begin{picture}(80,60)(0,0)
\put(40,0){\circle*{3}}
\put(40,20){\circle*{3}}
\put(20,40){\circle*{3}}
\put(60,40){\circle*{3}}
\put(40,60){\circle*{3}}
\put(38,-10){$d$}
\put(30,63){$b\vee c$}
\put(13,37){$b$}
\put(63,38){$c$}
\put(42,16){$a$}
\put(40,20){\line(1,1){20}}
\put(40,20){\line(0,-1){20}}
\put(40,60){\line(-1,-1){20}}
\put(40,60){\line(1,-1){20}}
\put(20,40){\line(1,-1){20}}
\end{picture}
&\hspace*{-20pt}
\begin{picture}(80,60)(0,0)
\put(40,0){\circle*{3}}
\put(40,60){\circle*{3}}
\put(25,45){\circle*{3}}
\put(10,30){\circle*{3}}
\put(70,30){\circle*{3}}
\put(3,27){$b$}
\put(18,44){$d$}
\put(73,28){$c$}
\put(38,-8){$a$}
\put(30,63){$b\vee c$}
\put(40,60){\line(-1,-1){30}}
\put(40,60){\line(1,-1){30}}
\put(40,0){\line(-1,1){30}}
\put(40,0){\line(1,1){30}}
\end{picture}
&\hspace*{-20pt}
\begin{picture}(80,60)(0,0)
\put(40,60){\circle*{3}}
\put(40,0){\circle*{3}}
\put(20,20){\circle*{3}}
\put(60,20){\circle*{3}}
\put(40,40){\circle*{3}}
\put(38,-8){$a$}
\put(38,63){$d$}
\put(43,40){$b\vee c$}
\put(13,17){$b$}
\put(63,18){$c$}
\put(40,0){\line(1,1){20}}
\put(40,60){\line(0,-1){20}}
\put(40,40){\line(-1,-1){20}}
\put(40,40){\line(1,-1){20}}
\put(20,20){\line(1,-1){20}}
\end{picture}
&\hspace*{-20pt}
\begin{picture}(80,60)(0,0)
\put(40,0){\circle*{3}}
\put(40,60){\circle*{3}}
\put(55,45){\circle*{3}}
\put(10,30){\circle*{3}}
\put(70,30){\circle*{3}}
\put(3,27){$b$}
\put(57,47){$d$}
\put(73,28){$c$}
\put(38,-8){$a$}
\put(30,63){$b\vee c$}
\put(40,60){\line(-1,-1){30}}
\put(40,60){\line(1,-1){30}}
\put(40,0){\line(-1,1){30}}
\put(40,0){\line(1,1){30}}
\end{picture}
&\hspace*{-20pt}
\begin{picture}(80,60)(0,0)
\put(40,0){\circle*{3}}
\put(40,15){\circle*{3}}
\put(55,45){\circle*{3}}
\put(25,45){\circle*{3}}
\put(40,30){\circle*{3}}
\put(40,60){\circle*{3}}
\put(38,-10){$d$}
\put(30,63){$b\vee c$}
\put(18,42){$b$}
\put(58,43){$c$}
\put(43,26){$a$}
\put(43,12){$e$}
\put(39,18){$\vdots $}
\put(40,30){\line(1,1){15}}
\put(40,30){\line(-1,1){15}}
\put(40,60){\line(-1,-1){15}}
\put(40,60){\line(1,-1){15}}
\put(40,0){\line(0,1){15}}
\end{picture}
&\hspace*{-10pt}
\begin{picture}(80,60)(0,0)
\put(20,0){\circle*{3}}
\put(0,20){\circle*{3}}
\put(40,20){\circle*{3}}
\put(20,40){\circle*{3}}
\put(60,40){\circle*{3}}
\put(40,60){\circle*{3}}
\put(42,15){$a$}
\put(30,63){$b\vee c$}
\put(13,39){$b$}
\put(63,37){$c$}
\put(18,-10){$d$}
\put(-7,18){$e$}
\put(20,0){\line(1,1){40}}
\put(20,0){\line(-1,1){20}}
\put(40,60){\line(-1,-1){40}}
\put(40,60){\line(1,-1){20}}
\put(20,40){\line(1,-1){20}}
\end{picture}
\end{tabular}\end{center}\vspace*{2pt}

{\em Subcase 2.2:} Assume that $|{\rm Con}(L)|<2^{n-3}=8\cdot 2^{n-6}>7\cdot 2^{n-6}$ and assume by absurdum that $|{\rm Con}(L)|>7\cdot 2^{n-6}$, so that $|{\rm Con}(L/\theta )|>7\cdot 2^{n-7}>5\cdot 2^{n-7}>4\cdot 2^{n-7}=2^{n-5}$ by Lemma \ref{lgcze}, (\ref{lgcze2}), so that, by the fact that $|L/\theta |\leq n-2$, the induction hypothesis and Theorem \ref{gczetgh}, (\ref{gczetgh1}) and (\ref{gczetgh2}), we have one of the following possible situations:\begin{itemize}
\item $|L/\theta |=n-2$ and $|{\rm Con}(L/\theta )|=2^{n-3}$ or $|{\rm Con}(L/\theta )|=2^{n-4}$, but then we are in one of the situations {\bf (a)} and {\bf (b)} from Subcase $2.1$, so that, as above, $|{\rm Con}(L)|\in \{2^{n-2},2^{n-3},6\cdot 2^{n-6}\}$, and none of these values fulfills $7\cdot 2^{n-6}<|{\rm Con}(L)|<2^{n-3}$, so we have a contradiction;
\item $|L/\theta |=n-3$ and $|{\rm Con}(L/\theta )|=2^{n-4}$, so that $L/\theta \cong {\cal L}_{n-3}$, which is situation {\bf (c)} from Subcase $2.1$, in which, as above, $|{\rm Con}(L)|\in \{2^{n-2},5\cdot 2^{n-5}>4\cdot 2^{n-5}=2^{n-3},2^{n-3},2^{n-4}=4\cdot 2^{n-6}\}$, and none of these values fulfills $7\cdot 2^{n-6}<|{\rm Con}(L)|<2^{n-3}$, so we have another contradiction.\end{itemize}

Therefore $|{\rm Con}(L)|\leq 7\cdot 2^{n-6}$, which concludes the proof of the theorem.\end{proof}

\begin{remark} For all $n\in \N ^*$:\begin{itemize}
\item $(2^{n-1},n,n)$ is CFI--representable, by Theorem \ref{gczetgh}, (\ref{gczetgh1});
\item if $n\geq 4$, then $(2^{n-2},n,n)$ is CFI--representable, by Theorem \ref{gczetgh}, (\ref{gczetgh2});
\item if $n\geq 5$, then $(5\cdot 2^{n-5},n,n)$ is CFI--representable, by Theorem \ref{nextcgno}, (\ref{nextcgno1});
\item if $n\geq 6$, then $(2^{n-3},n,n)$ is CFI--representable, by Theorem \ref{nextcgno}, (\ref{nextcgno2}), and $(7\cdot 2^{n-6},n,n)$ is CFI--representable, by the case $n=6$ in the proof of Theorem \ref{nextcgno} and Remark \ref{mult2}.\end{itemize}

Of course, if $|{\rm Con}(L)|=7\cdot 2^{n-6}$, then $n\geq 6$, because otherwise $7\cdot 2^{n-6}\notin \N $. Moreover, if we also use the case $n=7$ in the proof of Theorem \ref{nextcgno}, then we obtain that, for any $r,s\in \N ^*$, ${\cal L}_r\dotplus ({\cal L}_3\boxplus {\cal L}_5)\dotplus {\cal L}_s$ and ${\cal L}_r\dotplus ({\cal L}_4\boxplus {\cal L}_4)\dotplus {\cal L}_s$ CFI--represent $(7\cdot 2^{r+s-2},r+s+4,r+s+4)$, so that, if $n\geq 6$, then, for any $k\in \overline{1,n-5}$, $L\cong {\cal L}_k\dotplus ({\cal L}_3\boxplus {\cal L}_5)\dotplus {\cal L}_{n-k-4}$ and $L\cong {\cal L}_k\dotplus ({\cal L}_4\boxplus {\cal L}_4)\dotplus {\cal L}_{n-k-4}$ CFI--represent $(7\cdot 2^{n-6},n,n)$.

We conjecture that these are the only lattices which CFI--represent $(7\cdot 2^{n-6},n,n)$, so that: $|{\rm Con}(L)|=7\cdot 2^{n-6}$ iff $n\geq 6$ and, for some $k\in \overline{1,n-5}$, $L\cong {\cal L}_k\dotplus ({\cal L}_3\boxplus {\cal L}_5)\dotplus {\cal L}_{n-k-4}$ or $L\cong {\cal L}_k\dotplus ({\cal L}_4\boxplus {\cal L}_4)\dotplus {\cal L}_{n-k-4}$.

The technique from \cite{gcze}, which we have employed in our proof of Theorem \ref{nextcgno}, is probably also adequate for this fifth largest number of congruences of an $n$--element lattice, but an induction argument for this case would greatly lengthen our paper, so we are not pursuing it in the present work.\end{remark}

\begin{corollary}\begin{enumerate}
\item\label{cglat1} If $|{\rm Con}(L)|=2^{n-1}$, then ${\rm Con}(L)\cong {\cal L}_2^{n-1}$.
\item\label{cglat2} If $|{\rm Con}(L)|=2^{n-2}$, then $n\geq 4$ and ${\rm Con}(L)\cong {\cal L}_2^{n-2}$.
\item\label{cglat3} If $|{\rm Con}(L)|=5\cdot 2^{n-5}$, then $n\geq 5$ and ${\rm Con}(L)\cong {\cal L}_2^{n-5}\times ({\cal L}_2\dotplus {\cal L}_2^2)$.
\item\label{cglat4} If $|{\rm Con}(L)|=2^{n-3}$, then $n\geq 6$ and ${\rm Con}(L)\cong {\cal L}_2^{n-3}$.\end{enumerate}\end{corollary}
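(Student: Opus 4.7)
The plan is to reduce each of the four parts to the classification results Theorem \ref{gczetgh} and Theorem \ref{nextcgno}, which already pin down $L$ up to isomorphism, and then compute ${\rm Con}(L)$ using the factorization formulas from Remark \ref{cgprod} (for direct products) and Remark \ref{cgords} (for ordinal sums), together with the explicit congruence lattices of the small pieces collected in Remarks \ref{cgords} and \ref{cghs}. Since in all cases the identification of $L$ is already done for us, the only real content is a bookkeeping computation; nothing nontrivial should be needed.

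For (\ref{cglat1}), Theorem \ref{gczetgh}, (\ref{gczetgh1}) gives $L \cong {\cal L}_n$, and Remark \ref{cgords} already records ${\rm Con}({\cal L}_n)\cong {\cal L}_2^{n-1}$. For (\ref{cglat2}), Theorem \ref{gczetgh}, (\ref{gczetgh2}) gives $L\cong {\cal L}_k\dotplus {\cal L}_2^2\dotplus {\cal L}_{n-k-2}$ for some $k\in \overline{1,n-3}$; applying the ordinal-sum formula from Remark \ref{cgords}, together with ${\rm Con}({\cal L}_2^2)\cong {\cal L}_2^2$ (from Remark \ref{cgprod}) and ${\rm Con}({\cal L}_m)\cong {\cal L}_2^{m-1}$, yields
\[
{\rm Con}(L)\cong {\cal L}_2^{k-1}\times {\cal L}_2^2\times {\cal L}_2^{n-k-3}\cong {\cal L}_2^{n-2}.
\]

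For (\ref{cglat3}), Theorem \ref{nextcgno}, (\ref{nextcgno1}) gives $L\cong {\cal L}_k\dotplus N_5\dotplus {\cal L}_{n-k-3}$ for some $k\in \overline{1,n-4}$. Remark \ref{cghs} records ${\rm Con}(N_5)\cong {\cal L}_2\dotplus {\cal L}_2^2$, so the same ordinal-sum product gives
\[
{\rm Con}(L)\cong {\cal L}_2^{k-1}\times ({\cal L}_2\dotplus {\cal L}_2^2)\times {\cal L}_2^{n-k-4}\cong {\cal L}_2^{n-5}\times ({\cal L}_2\dotplus {\cal L}_2^2),
\]
which is the required form.

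For (\ref{cglat4}), Theorem \ref{nextcgno}, (\ref{nextcgno2}) presents two possible shapes of $L$, and the mild obstacle is to verify that both shapes produce the same congruence lattice ${\cal L}_2^{n-3}$. In the shape $L\cong {\cal L}_k\dotplus ({\cal L}_2\times {\cal L}_3)\dotplus {\cal L}_{n-k-4}$, Remark \ref{cgprod} gives ${\rm Con}({\cal L}_2\times {\cal L}_3)\cong {\rm Con}({\cal L}_2)\times {\rm Con}({\cal L}_3)\cong {\cal L}_2\times {\cal L}_2^2\cong {\cal L}_2^3$, and then the ordinal-sum product yields ${\cal L}_2^{k-1}\times {\cal L}_2^3\times {\cal L}_2^{n-k-5}\cong {\cal L}_2^{n-3}$. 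In the shape $L\cong {\cal L}_k\dotplus {\cal L}_2^2\dotplus {\cal L}_m\dotplus {\cal L}_2^2\dotplus {\cal L}_{n-k-m-4}$ the same tools give ${\cal L}_2^{k-1}\times {\cal L}_2^2\times {\cal L}_2^{m-1}\times {\cal L}_2^2\times {\cal L}_2^{n-k-m-5}\cong {\cal L}_2^{n-3}$. The lower bounds $n\geq 4,5,6$ in (\ref{cglat2})--(\ref{cglat4}) were already proved as part of Theorems \ref{gczetgh} and \ref{nextcgno}, so no further argument on $n$ is needed.
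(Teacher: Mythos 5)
Your proposal is correct and matches the paper's own argument, which likewise derives the corollary from Theorems \ref{gczetgh} and \ref{nextcgno} together with the factorization formulas of Remarks \ref{cgprod}, \ref{cgords} and \ref{cghs}; your exponent bookkeeping in all four cases checks out. The paper merely adds an optional shortcut for parts (\ref{cglat1}), (\ref{cglat2}) and (\ref{cglat4}): the lattices classified there are distributive, so their congruence lattices are Boolean by a cited theorem of Schmidt, which bypasses the ordinal-sum computation.
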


\begin{proof} (\ref{cglat1}) and (\ref{cglat2}) By Theorem \ref{gczetgh} and either Remarks \ref{cgprod} and \ref{cgords}, or the fact that the lattices that CFI--represent $(2^{n-1},n,n)$ or $(2^{n-2},n,n)$ are distributive, thus modular, and, of course, finite, hence their congruence lattices are Boolean, according to \cite[Theorem $3.5.1$]{schmidt}.

\noindent (\ref{cglat3}) By Theorem \ref{nextcgno} and Remarks \ref{cgords} and \ref{cghs}. Note that the lattices that CFI--represent $(5\cdot 2^{n-5},n,n)$ are non--modular.

\noindent (\ref{cglat4}) By Theorem \ref{nextcgno} and either Remarks \ref{cgprod} and \ref{cgords}, or the fact that the lattices that CFI--represent $(2^{n-3},n,n)$ are distributive, hence, again, their congruence lattices are Boolean, according to \cite[Theorem $3.5.1$]{schmidt}.\end{proof}

\section*{Acknowledgements}

This work was supported by the research grant {\em Propriet\`a d`Ordine Nella Semantica Algebrica delle Logiche Non--classiche} of Universit\`a degli Studi di Cagliari, Regione Autonoma della Sardegna, L. R. $7/2007$, n. $7$, $2015$, CUP: ${\rm F}72{\rm F}16002920002$.

J\' ulia KULIN

kulin@math.u--szeged.hu

University of Szeged\vspace*{7pt}

Claudia MURE\c SAN

cmuresan@fmi.unibuc.ro, c.muresan@yahoo.com

University of Cagliari, University of Bucharest\end{document}